\newtheorem{theorem}{Theorem}[section]
\newtheorem{lemma}[theorem]{Lemma}
\newtheorem{proposition}[theorem]{Proposition}
\newtheorem{corollary}[theorem]{Corollary}
\theoremstyle{remark}
\newtheorem{remark}[theorem]{Remark}
\numberwithin{equation}{section}
\numberwithin{figure}{section}
\newcommand{\R}{\mathbb{R}}
\newcommand{\N}{\mathbb{N}}
\newcommand{\Z}{\mathbb{Z}}
\newcommand{\bra}[1]{\left(#1\right)}
\newcommand{\cur}[1]{\left\{#1\right\}}
\newcommand{\ang}[1]{\left<#1\right>}
\newcommand{\abs}[1]{\left\lvert#1\right\rvert}
\newcommand{\norm}[1]{\left\lVert#1\right\rVert}
\let\d\undefined
\newcommand{\d}{\mathop{}\!\mathrm{d}}
\newcommand{\A}{\mathcal{A}}
\newcommand{\simp}{\operatorname{Simp}}
\newcommand{\chain}{\operatorname{Chain}}
\newcommand{\strat}{\mathsf{strat}}
\newcommand{\ito}{\mathsf{ito}}
\newcommand{\germ}{\operatorname{Germ}}
\newcommand{\dya}{\operatorname{dya}}
\renewcommand{\d}{\mathsf{d}}
\newcommand{\cut}{\operatorname{cut}}
\renewcommand{\fill}{\operatorname{fill}}
\renewcommand{\O}{D}
\newcommand{\conv}{\operatorname{conv}}
\newcommand{\diam}{\operatorname{diam}}
\begin{document}

\author{Giovanni Alberti}
\address{Giovanni Alberti, Dipartimento di Matematica, Universit\`a di Pisa \\
Largo Bruno Pontecorvo 5 \\ I-56127, Pisa}
\email{giovanni.alberti@unipi.it}
\date{\today}

\author{Eugene Stepanov}
\address{St.Petersburg Branch of the Steklov Mathematical Institute of the Russian Academy of Sciences,
Fontanka 27,
191023 St.Petersburg, Russia
\and
Faculty of Mathematics, Higher School of Economics, Moscow
}
\email{stepanov.eugene@gmail.com}

\author{Dario Trevisan}
\address{Dario Trevisan, Dipartimento di Matematica, Universit\`a di Pisa \\
Largo Bruno Pontecorvo 5 \\ I-56127, Pisa}
\email{dario.trevisan@unipi.it}

\thanks{This work has been partially supported by the University of Pisa, Project PRA 2018-49 and Gnampa project 2019 ``Propriet\`a analitiche e geometriche di campi aleatori''.
The work of the second author has been also partially financed by
the Program of the Presidium of the Russian
Academy of Sciences \#01 'Fundamental Mathematics and its Applications'
under grant PRAS-18-01 and
RFBR grant \#20-01-00630.
}
\subjclass[2010]{Primary 53C65. Secondary 49Q15, 60H05.}
\keywords{Young integral, It\^{o} integral, Stratonovich integral, Rough paths, sewing lemma.}
\date{\today}

\title[Integration of nonsmooth $\boldsymbol{2}$-forms]{Integration of nonsmooth $\boldsymbol{2}$-forms:
from Young to   It\^{o} and Stratonovich
}

\begin{abstract}
We show that geometric integrals of the type $\int_\Omega f\d g^1\wedge \d g^2$
can be defined over a two-dimensional domain $\Omega$ when the functions
$f$, $g^1$, $g^2\colon \R^2\to \R$ are  just H\"{o}lder continuous
with sufficiently large H\"{o}lder exponents and the boundary of $\Omega$ has sufficiently small dimension, by summing over a refining sequence of partitions the discrete Stratonovich or It\^{o} type terms. This leads to a two-dimensional extension of the classical Young integral that coincides with the integral introduced recently by R.~Z\"{u}st. We further show that the Stratonovich-type summation allows to weaken the requirements on   H\"{o}lder exponents of the map $g=(g^1,g^2)$ when $f(x)=F(x,g(x))$ with $F$ sufficiently regular. The technique relies upon an extension of the sewing lemma from Rough paths theory to
alternating functions of two-dimensional oriented simplices, also proven in the paper.
\end{abstract}

\maketitle

\section{Introduction}
The scope of the present paper is constructing explicitly, via the appropriate discrete approximations, the extension of the classical notion of the integral of the differential $2$-form $f\d g^1\wedge \d g^2$ over any sufficiently nice oriented planar domain $\Omega\subset \R^2$ (one might think for simplicity of $\Omega$ being just
an oriented polygon, or even simpler, a triangle) to the case when the maps $f\colon \R^2\to \R$, $g:=(g_1, g_2)\colon \R^2\to \R^2$
are only H\"{o}lder continuous, so that one might only put the word ``differential'' above in quotation marks, because $g$ might have no derivatives. If $g$ is sufficiently smooth and $f$ just continuous, then $f\d g^1\wedge \d g^2$ can be understood in the modern differential geometry language as 
$f g^*(\d x^1\wedge \d x^2)$, where $\d x^i$ are coordinate $1$-forms, $i=1,2$, and $g^*$ stands 
for the pull-back via $g$, or, alternatively, in a more analytic language,
\begin{equation}\label{eq_fidg1dg2_def0}
\int_\Omega f\d g^1\wedge \d g^2 := \int_\Omega f(x) \det \bra{ \begin{array}{ll} \partial_1 g^1(x) & \partial_2 g^1(x) \\
	\partial_1 g^2 (x) & \partial_2 g^2(x) \end{array}}\, dx,
\end{equation}
$\partial_i$ standing for partial derivatives in the coordinate direction $x_i$, $i=1,2$.
The latter integral is the natural building block for integrals of classical (smooth) differential $2$-forms over smooth parameterized $2$-dimensional surfaces in $\R^n$ via pull-back. 
One comes therefore inevitably to the problem posed when trying to integrate even a very smooth 
differential $2$-form $\omega$ in $\R^n$ over a parameterized H\"older surface $\varphi\colon\Omega\subset\R^2\to \R^n$,  $\varphi(x) = (\varphi^i(x))_{i=1}^n$, letting formally
\[ \int_{\varphi(\Omega)} \omega := \int_{\Omega} \varphi^*\omega,\]
where $\varphi^* \omega$ stands for pull-back of $\omega$ via $\varphi$, i.e.\ 
$\varphi^* \omega := \sum_{i,j} (a_{ij}\circ \varphi) \d \varphi^i \wedge \d \varphi^j$
when $\omega = \sum_{i,j} a_{ij} \d x^i \wedge \d x^j$.

\subsection{History} 
\subsubsection{One-dimensional integrals} 
The one-dimensional prototype of this problem, that is, extending the integral of 
a differential $1$-form $u\d v$ over an interval $[a,b]$ of the real axis
to the maps $u, v\colon \R\to \R$
that are only H\"{o}lder continuous, has been solved by
L.C.~Young~\cite{young_inequality_1936} and independently by V.~Kondurar~\cite{kondurar_1937}. They 
defined the respective integral $\int_a^b u\d v$ as a limit in $k$ of a converging sequence of Riemann sums of the type
$\sum_{i=0}^{k-1} u(a_i)(v(a_{i+1})-v(a_{i}))$ over an appropriate sequence of refining partitions of the interval
$[a,b]$ by consequtive points $a_0:=a < a_1<\ldots < a_k:=b$, thus
mimicking the definition of the classical Riemann integral. This provides
an extension of the latter to the case $u\in C^\alpha(\R)$, $v\in C^\beta(\R)$ when $\alpha+\beta>1$
(later several generalizations of this result for wider classes of functions were provided, 
see e.g.~\cite{young38-stieltjes} as well as the recent paper~\cite{Yaskov19-stieltjes} and references therein).
It is worth remarking that 
the original proof of Young~\cite{young_inequality_1936} was quite 
``handmade'', just by the repetitive use of H\"{o}lder inequality. 
Rather, nowadays  
it is a custom to do it in a more ``automated'' way by using the so-called one-dimensional \textit{sewing lemma}~\cite[lemma 2.1]{feyel_curvilinear_2006}, which together with the construction of this integral, 
now usually called \textit{Young integral}, is one of the basic pillars of the modern theory of \textit{Rough paths}~\cite{friz_course_2014,gubinelli_controlling_2004}~\footnote{A historic curiosity: the modern construction of the
Young integral via sewing lemma is closer to the original one used by Kondurar in~\cite{kondurar_1937} although his contribution to the subject seems to be unfortunately not so well-known.}.

Note that 
in the summands
$u(a_i)(v(a_{i+1})-v(a_{i}))$ one could 
replace $u(a_i)$ by, for instance,
\[
\bar u_{[a_ia_{i+1}]}:= \frac 1 2 (u(a_i)+u(a_{i+1})),
\]
thus leading to a different notion of integral.
 Minding the obvious analogy with stochastic \textit{It\^{o}} (resp.\ \textit{Stratonovich}) integration, we will further call these two constructions It\^{o} (resp.\ Stratonovich) summation. The general conditions on functions $u$ and $v$ for the limits in each of these cases to exist have been studied in~\cite{Smith25-stieltjes} (in the subsequent paper~\cite{WrightBaker69-stieltjes} even 
more general weighted averages of $u$ 
in place of $\bar u_{[a_ia_{i+1}]}$ were considered). Finally, V.~Matsaev and M.~Solomyak constructed in~\cite{MacaevSolomjak72} a similar integral substituting 
$\bar u_{[a_ia_{i+1}]}$ by
the integral average $\fint_{[a_i, a_{i+1}]} u$, which extends the classical integral of 
a smooth differential $1$-form $u\d v$ over an interval to the case when $v\in C^\beta(\R)$
is H\"{o}lder continuous and $u$ belongs to the Besov space $B_{1,1}^\alpha$ with $\alpha+\beta\geq 1$.
In all the mentioned cases the result is the same for $u\in C^\alpha(\R)$, $v\in C^\beta(\R)$ with $\alpha+\beta>1$, but may be different for more general functions. 

\subsubsection{Multidimensional integrals} 
Subsequently, several ways were proposed to extend the above mentioned one-dimensional constructions to multidimensional cases,
notably~\cite{towghi_multidimensional_2002,chouk_2014}, which however 
lack the very important geometric property of the classical integral of multidimensional forms, namely, that of being \textit{alternating}, i.e.\  changing sign with the change of domain orientation (although we also have to mention quite a different and purely geometric approach 
of~\cite{harrison_differential_2006} allowing to treat integration of smooth 
differential forms over nonsmooth domains, e.g.\ having fractal boundary, 
and a quite curious recent construction of~\cite{Yam08}, 
reducing the multidimensional integral to a one-dimensional one involving a Peano-like curve). 

A different approach to the definition of a multidimensional 
integral of nonsmooth ``differential forms'' has been taken by 
R.~Z\"{u}st~\cite{zust_integration_2011}. Applied to the 2D situation which is
of interest in the present paper, it shows that
the integral~\eqref{eq_fidg1dg2_def0}
defined over smooth maps, 
admits the unique extension by continuity with respect to the natural topology of pointwise convergence with bounded H\"{o}lder constants to a multilinear continuous functional
\[
(f, g^1, g^2)\in  C^{\alpha}(\R^2) \times C^{\beta_1}(\R^2) \times
C^{\beta_2}(\R^2)\mapsto I(f, g^1, g^2)
\]
 vanishing over degenerate rectangles and triangles (namely, those having zero area) and alternating in the last two entries, if
$\alpha+ \beta_1 + \beta_2 >2$.
 This functional can be therefore naturally called an integral
\[ 
\int_\Omega f\d g^1\wedge \d g^2 := I(f, g^1, g^2), 
\]
and can be approximated by sums over triangles 
forming the sufficiently fine dyadic decomposition of $\Omega$ of 
the functions of three variables (which can be better thought as functions of a triangle) $(p,q,r)\in (\R^2)^3\mapsto \eta_{pqr}$ defined by
\begin{equation}\label{eq_germzust1}
\begin{aligned}
 \eta_{pqr} 
&:= f_{p}
\int_{\partial [pqr]} g^1\d g^2,
\end{aligned}
\end{equation}
where $f_p:=f(p)$,
the integral above being intended in the sense of Young
(note that in~\cite{zust_integration_2011} a slightly different language 
was used with rectangles instead of triangles; the current 
language is taken from~\cite{SteTrev18a} where a unified approach 
for integration of multidimensional nonsmooth ``differential forms'' called
\emph{``rough differential forms''} up to dimensions $1$ and $2$ was suggested).
R.~Z\"{u}st himself has further successfully employed this integral in several remarkable geometric problems in~\cite{Zust_tree15}.

It is easy to observe that the definition of the integral of $f\d g^1\wedge \d g^2$ through the limit of sums of terms~\eqref{eq_germzust1} over sequences of
refining partitions, is a clear generalization of the construction of the 
one-dimensional Young integral described above. It is inherently based upon
integration by parts, i.e.\ is made so that the Stokes theorem
\[
\int_\Omega \d g^1\wedge \d g^2 =\int_{\partial\Omega} g^1\d g^2
\]
almost automatically be satisfied for appropriate $\Omega\subset \R^2$ 
(rectangle in~\cite{zust_integration_2011} or triangle in~\cite{SteTrev18a}). 
This is however not how one usually expects the integral to be defined: in fact, the Young integrals over the sides of the triangle $[pqr]$ in~\eqref{eq_germzust1} have themselves to be defined either indirectly as continuous extensions of integrals of smooth
differential forms approximating the ``rough differential form'' $g^1\d g^2$ or as a limit of
sums of appropriate discrete approximations (on the contrary, 
the abstract extension of \eqref{eq_fidg1dg2_def0} from spaces of smooth functions to Sobolev or Besov spaces can be done via the technique from \cite{SickelYoussfi99,BrezisNguyen11} dealing with weak Jacobians).

\subsection{Our contribution}
It seems therefore more natural to define the integral of the ``rough differential forms'' $f\, \d g^1\wedge \d g^2$ by \textit{purely discrete} approximations.
To this aim for $f \in C^{\alpha}(\R^2)$, $g^i \in C^{\beta_i}(\R^2)$, $i=1,2$, with $\alpha+ \beta_1 + \beta_2 >2$,  we write
\begin{equation}\label{eq_germzust2}
\begin{aligned}
 \strat_{pqr} &:= \frac 12 \bra{\frac{f_{p} +f_{q} +f_r}{3}} \det \bra{ \begin{array}{ll} \delta g^1_{p q} & \delta g^1_{pr} \\
\delta g^2_{pq} & \delta g^2_{pr} \end{array}},\\
 \ito_{pqr} &:= \frac 12 f_{p} \det \bra{ \begin{array}{ll} \delta g^1_{p q} & \delta g^1_{pr} \\
\delta g^2_{pq} & \delta g^2_{pr} \end{array}} \quad \text{for $[pqr]\subset \R^2$},
\end{aligned}
\end{equation}
where we write $f_u$ instead of $f(u)$ and
$\delta g^i_{uv}:= g^{i}(v)-g^i(u)$, $i=1,2$.
We refer to $\strat$ and $\ito$ seen as functions of three variables 
(better viewed as functions of a two-dimensional simplex) 
as
 \emph{Stratonovich germ} and to the latter
one as \emph{It\^{o} germ} because of their obvious similarity with discrete constructions of the respective integrals in stochastic calculus. The 
terminology of ``germs'', meaning just functions of finite-dimensional simplices, is borrowed from \emph{``germs of rough differential forms''}~\cite{SteTrev18a}, which is in turn inherited from the Rough Paths theory~\cite{gubinelli_controlling_2004}.

In this paper we show that
\begin{itemize}
\item[(A)] if $\Omega$ is an oriented simplex (i.e.\ a triangle), then 
summing either  It\^{o} or Stratonovich germs  
over any sufficiently nice family of its refining triangular partitions
(in particular, dyadic ones) with the appropriately chosen orientation will still lead to the same integral defined by Z\"{u}st, and estimate the rate of convergence (Theorems~\ref{th_exist_discrInt1},~\ref{th_arbpart2}). The respective integral may
be called both It\^{o} and Stratonovich, and in fact generalizes the one-dimensional Young integral.
It is worth emphasizing that this result might seem counterintuitive. In fact the integral should clearly vanish over
degenerate triangles $\Omega$ (i.e.\ those having zero area), while neither the Stratonovich nor the It\^{o} germ possess this property (which we will further call \emph{nonatomicity}), as opposed to the germ $\eta$ defined by~\eqref{eq_germzust1},
nor they are in some obvious way asymptotically close to some nonatomic germ
(unless of course the functions $g_1$ and $g_2$ are differentiable). It is therefore not at all clear how can one expect to be nonatomic a limit of sums of germs which are essentially not so;
\item[(B)] the integral defined in such a way can be extended to a large class of bounded open sets $\Omega\subset\R^2$ having sufficiently small 
box-counting dimension of the topological boundary (Theorem~\ref{th_intOm1}), and in particular can be defined in a very natural way for $\Omega$ a simple polygon (Proposition~\ref{prop_intPoly1});
\item[(C)] when $f$ has a particular form $f(x)=F(x, g(x))$, then the conditions of the existence of the integral extending the classical one (for smooth forms), i.e.\ the requirements
on H\"{o}lder exponents of $g^i$,  may be significantly relaxed at the price of requiring  $F\colon
\R^2\times \R^2\to \R$ to be sufficiently regular (Theorem~\ref{th_irregInt1a})
  by employing  Stratonovich germs. This is however a very particular
feature of Stratonovich but not of It\^{o} summation as can be seen also
in the one-dimensional situation (Remark~\ref{rem_Strat1D}). The resulting Stratonovich type integral is shown to satisfy the classical chain rule (Proposition~\ref{prop_irregIntChrule2}) and may be identified with the  ``second order Riemann-Stieltjes'' integral introduced in~\cite{zust_integration_2011}, 
the respective identification leading to a curious continuity estimate for the degree of H\"{o}lder maps (Remark~\ref{rm_discr_deg1}).
We also give an interpretation of these results in geometric terms of the 
existence
of continuous extensions of De Rham currents associated with the graphs of
smooth maps $g\colon \R^2\to \R^2$ to those associated with 
graphs of H\"{o}lder maps with sufficiently large H\"{o}lder exponents, 
the continuity being intended in the weak (pointwise) topology of currents (Proposition~\ref{prop_irregIntCurr1}). 
\end{itemize}

The key role in the proofs will be played by the observation that both the integral and the Stratonovich germ
are \emph{alternating}, i.e.\ they change sign when the triangle over which they are defined changes the orientation. In fact, our basic instrument will be the natural generalization of the two-dimensional sewing lemma and stability theorem
from~\cite{SteTrev18a} to abstract alternating germs 
(Lemmata~\ref{lm_Vnlimcont1a} and~\ref{lm_Vnlimstab1} respectively).

\section{Notation and preliminaries}

\subsection*{Spaces}
Let $D\subset \R^n$ be an open set. For an $\alpha\in (0,1)$ we will write $C^\alpha(\bar D)$ (abbreviated just to $C^\alpha$ when
there is no possibility of confusion) for the H\"{o}lder space with exponent $\alpha$. For an $f\in C^\alpha(\bar D)$ we denote by
$[\delta f]_\alpha$ its H\"{o}lder seminorm, and $\|f\|_\alpha:=\|f\|_\infty + [\delta f]_\alpha$ its H\"{o}lder norm, where
$\|\cdot\|_\infty$ stands for the usual supremum norm in the space of continuous function $C(\bar D)$ (usually abbreviated to $C$).
The notation $C^1(\bar D)$ (or just $C^1$ for brevity) will stand for the usual space of continuously differentiable functions.

\subsection*{Simplices, chains, germs and rough differential forms}
For an ordered $(k+1)$-uple of points $S=[p_0 p_1 \ldots p_k] \in D^{k+1}$ we write  $\conv S:=\conv\{p_0 p_1 \ldots p_k\}$
and $\diam S$ for the convex envelope and the diameter of the set of points $\{p_0,\ldots, p_k \}$ respectively, and call $S$ an (oriented) \emph{$k$-simplex} in $D$, if $\conv S\subset D$, the set of such simplices being defnoted by $\simp^k(D)$.
For a $k$-simplex $S\in \simp^k(D)$ we denote by $|S|$ its $k$-dimensional volume.
A (real polyhedral) \emph{$k$-chain} in $D$ is an element
of the real vector space $\chain^k$(D) generated by $k$-simplices in $D$.
A $k$-simplex can be identified with the ``geometric'' simplex $\conv S$ with a chosen base point  $p_0$ and the chosen orientation  given by the order of the points in the list, so that $0$-simplices correspond to points, $1$-simplices to oriented segments and  $2$-simplices are pointed oriented triangles.

A \emph{$k$-germ} (of a $k$-differential form in $D$) is a function $\omega\colon \simp^k(D) \to \R$,
\[ S=[p_0 p_1 \ldots p_k] \mapsto \omega_{S} =\omega_{p_0p_1\ldots p_k}.\]
We also often write $\ang{S, \omega}$ instead of $\omega_S$.
A \emph{$k$-cochain} in $D$ is a linear functional $\omega\colon  \chain^k(D) \to \R$,
 \[ C\mapsto  \ang{C, \omega}.\]
For instance, $0$-germs are just functions $p_0\mapsto f(p_0) = f_{p_0} = \ang{[p_0], f}$.

The \emph{boundary} $\partial S$ of an $S\in \simp^k(D)$ is the $(k-1)$-chain defined by
\[  \partial [p_0 p_1 \ldots, p_k]  := \sum_{i=0}^k (-1)^i [p_0 \ldots \hat{p}_i \ldots p_k],\]
the notation $\hat{p}_i$ standing for removal of the respective element from the list.
The operator $\partial$ is naturally extended by linearity to $k$-chains.
The \emph{coboundary} of a $k$-germ $\omega$ is the $(k+1)$-germ $\delta\omega$ defined by duality with the boundary of simplices, namely,
\[
\ang{S,\delta\omega}:= \ang{\partial S,\omega}.
\]
For instance, for a $0$-germ $f$ one has $(\delta f)_{pq}=f_q-f_p$, and for a $1$-germ $\omega$
one has $(\delta \omega)_{pqr}=\omega_{qr}-\omega_{pr}+\omega_{pq}$.

A $k$-germ $\omega$ is called
\begin{itemize}
\item \emph{nonatomic}, if it vanishes on degenerate $k$-simplices $S$ (i.e.\ on those having zero $k$-dimensional volume $|S|=0$). For instance, the germ $\eta$ defined by~\eqref{eq_germzust1} is nonatomic, while the germs $\strat$ and $\ito$ defined by~\eqref{eq_germzust2} are not;
\item \emph{alternating}, if
\[
\ang{[p_0 p_1 \ldots p_k],\delta\omega}:= (-1)^\sigma \ang{[\sigma(p_0)\sigma(p_1) \ldots \sigma(p_k)],\omega}.
\]
for every permutation of vertices $\sigma\colon \{p_0,p_1 \ldots p_k\}\to \{p_0 p_1 \ldots p_k\}$,
$(-1)^\sigma$ standing for the sign of permutation (positive for even and negative for odd permutations).
For instance, among the germs defined by~\eqref{eq_germzust1} and~\eqref{eq_germzust2}, $\strat$ is alternating, while
$\eta$ and $\ito$ are not.
\end{itemize}
Finally, a $k$-germ $\omega$ is called
a \emph{rough differential $k$-form}, if it is continuous (as a function of vertices of a simplex), and both $\omega$ and $\delta\omega$ are nonatomic.
An example of a rough differential $1$-form (written $g^1\d g^2$ for $g^i \in C^{\beta_i}$, $i=1,2$,
with $\beta_1 + \beta_2 >1$) is given by the Young integral over the line segment $[pq]$, that is,
\[
\ang{[pq], g^1\d g^2}:=\int_{[pq]} g^1\d g^2.
\]
An example of a rough differential $2$-form (written $f\d g^1\wedge \d g^2$ for $f\in C^\alpha$, $g^i \in C^{\beta_i}$, $i=1,2$,
with $\alpha+\beta_1 + \beta_2 >2$) is given by the integral defined by R. Z\"{u}st in~\cite{zust_integration_2011}, namely,
\[
\ang{[pqr], f\d g^1\wedge \d g^2}:=\int_{[pqr]} f\d g^1\wedge \d g^2.
\]

The \emph{cup product} (called external product in~\cite{gubinelli_controlling_2004}) between a $k$-germ $\omega$ and a $h$-germ $\tilde{\omega}$ is the $(k+h)$-germ $\omega \cup \tilde{\omega}$ defined by
\[ \ang{[p_0 p_1\ldots p_k p_{k+1} \ldots p_{k+h} ], \omega \cup \tilde{\omega}} :=  \ang{[p_0 p_1\ldots p_k], \omega} \ang{ [p_k p_{k+1} \ldots p_{k+h} ], \tilde{\omega}}.\]
The cup product is associative but in general not commutative, and the following  Leibniz rule holds~\cite{SteTrev18a}: for $\omega \in \germ^k(\O)$, $\tilde\omega \in \germ^{h}(\O)$ one has
\begin{equation}
\label{eq:identity-leibniz-cup1}
 \delta ( \omega \cup \tilde\omega) = (\delta \omega) \cup \tilde{\omega} + (-1)^{k} \omega \cup (\delta \tilde{\omega}). \end{equation}

\section{Estimates on germs}

We start with the following useful algebraic lemma.

\begin{lemma}\label{lm_detcup1}
One has
\begin{equation}\label{eq_detcup1a}
\begin{aligned}
\frac 1 2\det \bra{ \begin{array}{ll} \delta g^1_{p q} & \delta g^1_{pr} \\
\delta g^2_{pq} & \delta g^2_{pr} \end{array}}
&= \frac 1 2\det \bra{ \begin{array}{ll} \delta g^1_{p q} & \delta g^1_{qr} \\
\delta g^2_{pq} & \delta g^2_{qr} \end{array}}
= \frac 1 2\det \bra{ \begin{array}{ll} \delta g^1_{r q} & \delta g^1_{pr} \\
\delta g^2_{rq} & \delta g^2_{pr} \end{array}}
\\
&= \A (\delta g^1\cup \delta g^2)_{pqr},
\end{aligned}
\end{equation}
where $\A$ stands for the antisymmetrization operator
\[
\A (\phi\cup\psi) := \frac 1 2 \left( \phi\cup\psi- \psi\cup\phi \right).
\]
In particular,
\begin{equation}\label{eq_detcup1b}
\ito_{pqr} = (f \cup \A (\delta g^1\cup \delta g^2))_{pqr}.
\end{equation}
\end{lemma}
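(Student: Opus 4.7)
This is a purely algebraic identity, and my plan is to carry out a direct three-step computation: first identify the rightmost expression $\A(\delta g^1 \cup \delta g^2)_{pqr}$ with the middle $2\times 2$ determinant in \eqref{eq_detcup1a} straight from the definition of the cup product; then reduce the remaining two determinants to that same expression by exploiting the telescoping identity built into $\delta^2 = 0$; finally derive \eqref{eq_detcup1b} as an immediate consequence.

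For the first step, unpacking the cup-product convention $(\omega \cup \tilde\omega)_{p_0 p_1 p_2} = \omega_{p_0 p_1}\tilde\omega_{p_1 p_2}$ at $k=h=1$ gives $(\delta g^1 \cup \delta g^2)_{pqr} = \delta g^1_{pq}\delta g^2_{qr}$ and symmetrically $(\delta g^2 \cup \delta g^1)_{pqr} = \delta g^2_{pq}\delta g^1_{qr}$, so that
\begin{equation*}
\A(\delta g^1 \cup \delta g^2)_{pqr} = \tfrac{1}{2}\left( \delta g^1_{pq}\delta g^2_{qr} - \delta g^2_{pq}\delta g^1_{qr} \right),
\end{equation*}
which is recognized at once as half of the middle determinant. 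For the second step, the identity $\delta(\delta g^i) = 0$ evaluated on $\partial [pqr]$ reads $\delta g^i_{pr} = \delta g^i_{pq} + \delta g^i_{qr}$, together with the trivial $\delta g^i_{rq} = -\delta g^i_{qr}$. Substituting the first relation into the first determinant and expanding, the two copies of $\delta g^1_{pq}\delta g^2_{pq}$ cancel and one recovers the middle expression verbatim; the third determinant is handled in exactly the same way modulo the obvious sign flip from $\delta g^i_{rq}$.

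For the ``In particular'' claim, the cup-product convention applied to the $0$-germ $f$ and the $2$-germ $\A(\delta g^1 \cup \delta g^2)$ immediately yields $(f \cup \A(\delta g^1 \cup \delta g^2))_{pqr} = f_p\, \A(\delta g^1 \cup \delta g^2)_{pqr}$; combining this with the chain of equalities in \eqref{eq_detcup1a} and the definition of $\ito$ in \eqref{eq_germzust2} gives \eqref{eq_detcup1b}. Since the whole argument is finite algebra over the reals, there is no analytic obstacle; the only thing requiring attention is keeping the cup-product index ordering consistent throughout, so that the antisymmetrization produces precisely the determinant rather than some related but distinct bilinear form.
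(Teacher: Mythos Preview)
Your proof is correct and follows essentially the same route as the paper's: both arguments reduce the equality of the three $2\times 2$ determinants to an elementary column operation exploiting $\delta g^i_{pr}=\delta g^i_{pq}+\delta g^i_{qr}$ (the paper subtracts the two determinants and obtains a matrix with repeated columns, whereas you substitute and expand, which is the same manipulation), and both read off the identification with $\A(\delta g^1\cup\delta g^2)_{pqr}$ directly from the definition of the cup product. The only difference is the order in which the equalities are established, which is immaterial.
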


\begin{proof}
It suffices to calculate
\begin{align*}\det \bra{ \begin{array}{ll} \delta g^1_{p q} & \delta g^1_{pr} \\
\delta g^2_{pq} & \delta g^2_{pr} \end{array}} -
\det \bra{ \begin{array}{ll} \delta g^1_{p q} & \delta g^1_{qr}
\\
\delta g^2_{pq} & \delta g^2_{qr} \end{array}}
& =
\det \bra{ \begin{array}{ll} \delta g^1_{p q} & \delta g^1_{pr}- \delta g^1_{qr} \\
\delta g^2_{pq} & \delta g^2_{pr} - \delta g^2_{qr}\end{array}}\\
& =
\det \bra{ \begin{array}{ll}  \delta g^1_{p q} & \delta g^1_{pq} \\
\delta g^2_{pq} & \delta g^2_{pq} \end{array}} =0
\end{align*}
to show the first equality in~\eqref{eq_detcup1a}; the third one follows then from the definition of $\A$. The second equality is quite analogous
from
\begin{align*}\det \bra{ \begin{array}{ll} \delta g^1_{p q} & \delta g^1_{pr} \\
\delta g^2_{pq} & \delta g^2_{pr} \end{array}} -
\det \bra{ \begin{array}{ll} \delta g^1_{r q} & \delta g^1_{pr} \\
\delta g^2_{rq} & \delta g^2_{pr} \end{array}}
& =
\det \bra{ \begin{array}{ll} \delta g^1_{p q} -  \delta g^1_{r q}& \delta g^1_{pr} \\
\delta g^2_{pq}  - g^2_{rq}& \delta g^2_{pr} \end{array}}\\
& =
\det \bra{ \begin{array}{ll}  \delta g^1_{qp} & \delta g^1_{pq} \\
\delta g^2_{qp} & \delta g^2_{pq} \end{array}} =0,
\end{align*}
concluding the proof.
\end{proof}

Notice that $\A (\delta g^1\cup \delta g^2)= \delta \eta$ with $\eta= \frac 1 2 \bra{g^1 \delta g^2 - g^2 \delta g^1}$.

\begin{lemma}\label{lm_estomgam1}
One has
\begin{eqnarray}
\label{eq_estgam1a}
|\ito_{pqr}-\strat_{pqr}| \le 2[\delta f]_{\alpha}[\delta g^1]_{\beta_1}[\delta g^2]_{\beta_2}
 \diam([pqr])^{\alpha+\beta_1+\beta_2}\\
\label{eq_estom1a}
|\strat_{pqr}| \le 
\norm{f}_\infty [\delta g^1]_{\beta_1} [\delta g^2]_{\beta_2} \diam([pqr])^{\beta_1+\beta_2}\\
\label{eq_estdom1b}
|\delta \strat_{pqrs} |  \le 8 [\delta f]_{\alpha}[\delta g^1]_{\beta_1} [\delta g^2]_{\beta_2} \diam([pqrs])^{\alpha+\beta_1+\beta_2 }
 \end{eqnarray}
 and $\strat$ is alternating, namely,
\[ \strat_{pqr} = \strat_{rpq} = \strat_{qrp} = -\strat_{rqp} = -\strat_{prq} = - \strat_{prq}.\]
\end{lemma}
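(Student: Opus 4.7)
The plan is to address the four conclusions in order of increasing difficulty: the size bound~\eqref{eq_estom1a}, the difference bound~\eqref{eq_estgam1a}, the alternating property, and finally the coboundary bound~\eqref{eq_estdom1b}. The first three are essentially one- or two-line computations once Lemma~\ref{lm_detcup1} is in hand; the real work is in~\eqref{eq_estdom1b}, where I would exploit the cup-product identity~\eqref{eq_detcup1b} together with the Leibniz rule~\eqref{eq:identity-leibniz-cup1}.

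For~\eqref{eq_estom1a}, bound the symmetric mean $(f_p+f_q+f_r)/3$ by $\|f\|_\infty$ and expand the $2\times 2$ determinant as a sum of two products $\delta g^1 \cdot \delta g^2$, each controlled by the H\"older seminorms times a power of $\diam([pqr])$. For~\eqref{eq_estgam1a}, the starting point is the algebraic identity
\[
\ito_{pqr}-\strat_{pqr} = -\tfrac16\bra{\delta f_{pq}+\delta f_{pr}}\det\bra{\begin{array}{ll}\delta g^1_{pq} & \delta g^1_{pr} \\ \delta g^2_{pq} & \delta g^2_{pr}\end{array}},
\]
after which direct H\"older bounds on each factor close the argument.

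To verify that $\strat$ is alternating, I would use~\eqref{eq_detcup1a} to rewrite $\strat_{pqr}=\tfrac13(f_p+f_q+f_r)\cdot\A(\delta g^1\cup\delta g^2)_{pqr}$. Since the prefactor is symmetric in $\{p,q,r\}$, antisymmetry reduces to that of $\A(\delta g^1\cup\delta g^2)$; the three equivalent determinant representations supplied by Lemma~\ref{lm_detcup1}, combined with the obvious $\delta g^i_{uv}=-\delta g^i_{vu}$, yield the sign change under every adjacent transposition, and since these generate $S_3$ all six identities follow.

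The main obstacle is~\eqref{eq_estdom1b}. I would split $\delta\strat = \delta\ito + \delta(\strat-\ito)$ and control the two summands separately. For $\delta\ito$, the identity~\eqref{eq_detcup1b} together with the observation right after Lemma~\ref{lm_detcup1} that $\A(\delta g^1\cup\delta g^2)=\delta\eta$ gives $\ito = f\cup\delta\eta$; the Leibniz rule~\eqref{eq:identity-leibniz-cup1} together with $\delta^2=0$ then collapses $\delta\ito$ to the single cup product $\delta f\cup\A(\delta g^1\cup\delta g^2)$, and a pointwise bound on this cup product follows from the H\"older control of $\delta f$ and the determinant estimate already used for~\eqref{eq_estom1a}. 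For $\delta(\strat-\ito)_{pqrs}$, the four-term expansion of the coboundary as an alternating sum over the faces of $[pqrs]$ lets me apply~\eqref{eq_estgam1a} face by face. Summing the two contributions yields the stated constant $8$ (with considerable room to spare). What really makes the argument go through is the cocycle trick $\delta^2\eta=0$: without it, the naive expansion of $\delta\ito$ would carry uncontrolled $\|f\|_\infty$-level terms which the sewing lemma to come could not absorb, and the whole point of passing through the cup-product representation of $\ito$ would be lost.
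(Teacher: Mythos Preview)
Your proposal is correct and follows essentially the same approach as the paper: direct H\"older estimates for~\eqref{eq_estom1a} and~\eqref{eq_estgam1a}, the alternating property read off from the symmetric prefactor times the antisymmetrized determinant, and for~\eqref{eq_estdom1b} the splitting $\delta\strat=\delta\ito+\delta(\strat-\ito)$ with $\delta\ito$ controlled via the cup-product identity for $\ito$ plus Leibniz and $\delta^2=0$, and $\delta(\strat-\ito)$ bounded face by face using~\eqref{eq_estgam1a}. Your route to $\delta\ito=\delta f\cup\A(\delta g^1\cup\delta g^2)$ through the observation $\A(\delta g^1\cup\delta g^2)=\delta\eta$ is a cosmetic variant of the paper's direct verification that $\delta(\delta g^i\cup\delta g^j)=0$; the paper in fact uses the sharper constant $1$ (rather than the stated $2$) in the proof of~\eqref{eq_estgam1a} to make the final tally land below $8$, which is what your ``considerable room to spare'' is implicitly invoking.
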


\begin{remark}\label{rem_estom1a} Clearly,~\eqref{eq_estom1a}
holds even for every $f\in M$, where $M$ stands for the space of bounded
(not necessarily measurable) functions over $\R^2$ equipped with the supremum norm (still denoted by $\|\cdot\|_\infty$).
\end{remark}

\begin{proof}
The estimate~\eqref{eq_estom1a} as well as the alternating property of $\strat$ is immediate from the definition
of $\strat$.
To show~\eqref{eq_estgam1a}, we calculate
\begin{align*}
|\ito_{pqr}- \strat_{pqr}| & =
\frac 1 2 \left|\bra{\frac{f_{p} +f_{q} +f_r}{3} -f_p} \det \bra{ \begin{array}{ll} \delta g^1_{p q} & \delta g^1_{pr}\\
\delta g^2_{pq} & \delta g^2_{pr} \end{array}}\right| \\
& \leq [\delta f]_{\alpha}[\delta g^1]_{\beta_1}[\delta g^2]_{\beta_2}\diam (pqr)^{\alpha+\beta_1+\beta_2}
\end{align*}
as claimed.
Thus,~\eqref{eq_estdom1b} would follow once one proves
\begin{eqnarray}
\label{eq_estdgam1a}
|\delta \ito_{pqrs} |
\le 
[\delta f]_{\alpha}[\delta g^1]_{\beta_1} [\delta g^2]_{\beta_2} \diam([pqrs])^{\alpha+\beta_1+\beta_2 }.
 \end{eqnarray}
To show the latter inequality,
we use Lemma~\ref{lm_detcup1}: namely, by~\eqref{eq_detcup1b} one has
\begin{equation}
\label{eq_detcup1c}
\ito = \frac 1 2 \left( (f \cup \delta g^1\cup \delta g^2)- (f \cup \delta g^2\cup \delta g^1)\right).
\end{equation}
Therefore, using the fact that
\[
\delta (\delta g^1\cup \delta g^2) = \delta g^1\cup \delta( \delta g^2) - \delta(\delta g^1)\cup \delta g^2=0,
\]
and analogously $\delta (\delta g^2\cup \delta g^1)=0$, from~\eqref{eq_detcup1c} we get
\begin{equation}\label{eq_estdgam1b}
\begin{aligned}
\delta\ito & = \frac 1 2 \left( \delta (f \cup \delta g^1\cup \delta g^2)- \delta (f \cup \delta g^2\cup \delta g^1)\right) \\
& =
\frac 1 2 \left( (\delta f \cup \delta g^1\cup \delta g^2)- (\delta f \cup \delta g^2\cup \delta g^1)\right).
\end{aligned}
\end{equation}
Since clearly,
\begin{align*}
|(\delta f \cup \delta g^1\cup \delta g^2)_{pqrs}| & = |(\delta f)_{pq}|\cdot |(\delta g^1)_{qr}| \cdot |(\delta g^2)_{rs}|\\
& \leq  [\delta f]_{\alpha}[\delta g^1]_{\beta_1} [\delta g^2]_{\beta_2} \diam([pqrs])^{\alpha+\beta_1+\beta_2 },
\end{align*}
and analogously
\[
|(\delta f \cup \delta g^2\cup \delta g^1)_{pqrs}|
\leq  [\delta f]_{\alpha}[\delta g^1]_{\beta_1} [\delta g^2]_{\beta_2} \diam([pqrs])^{\alpha+\beta_1+\beta_2 },
\]
from~\eqref{eq_estdgam1b} we get~\eqref{eq_estdgam1a},
and therefore~\eqref{eq_estdom1b},
hence concluding the proof.
\end{proof}

Later in section~\ref{sec_strat_irreg1}
we will need also the following curious algebraic identity which is a peculiar property of only the Stratonovich germ
$\strat$ and not of the It\^{o} germ $\ito$, and
could have been also used
for an alternative proof of~\eqref{eq_estdom1b} in Lemma~\ref{lm_estomgam1}.

\begin{lemma}\label{lm_StratDet1}
One has
\[
(\delta\strat)_{pqrs} =
\frac 1 6\det \bra{ \begin{array}{lll}
\delta f_{p q} & \delta f_{pr} & \delta f_{ps} \\
\delta g^1_{p q} & \delta g^1_{pr} &\delta g^1_{ps} \\
\delta g^2_{pq} & \delta g^2_{pr} &\delta g^2_{ps}
\end{array}}.
\]
\end{lemma}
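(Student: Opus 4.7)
My plan is to reduce both sides of the claimed identity to a single common expression involving the closed $2$-germ $M := \A(\delta g^1\cup \delta g^2)$. By Lemma~\ref{lm_detcup1}, $M_{pqr} = \frac{1}{2}\det\begin{pmatrix}\delta g^1_{pq} & \delta g^1_{pr}\\ \delta g^2_{pq} & \delta g^2_{pr}\end{pmatrix}$, so that $\strat_{pqr} = \frac{1}{3}(f_p+f_q+f_r)M_{pqr}$. A key preliminary fact is that $M$ is closed, i.e.\
\[
M_{qrs} - M_{prs} + M_{pqs} - M_{pqr} = 0. \qquad (\ast)
\]
This follows from $\delta(\delta g^i) = 0$ and two applications of the Leibniz rule~\eqref{eq:identity-leibniz-cup1} to $\delta g^1 \cup \delta g^2$, plus the fact that antisymmetrization commutes with $\delta$.

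First I would expand the LHS: $(\delta\strat)_{pqrs} = \strat_{qrs} - \strat_{prs} + \strat_{pqs} - \strat_{pqr}$, substitute the factorized form $\strat_{\cdot\cdot\cdot} = \frac{1}{3}(f_\cdot+f_\cdot+f_\cdot)M_{\cdot\cdot\cdot}$, and collect terms by $f_p, f_q, f_r, f_s$. Each coefficient is an alternating sum of three of the four values $M_{qrs}, M_{prs}, M_{pqs}, M_{pqr}$, which by $(\ast)$ collapses to $\pm \frac{1}{3}$ times the remaining fourth value. I expect to obtain
\[
(\delta\strat)_{pqrs} = \frac{1}{3}\bra{-f_p M_{qrs} + f_q M_{prs} - f_r M_{pqs} + f_s M_{pqr}}.
\]

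Next I would expand the $3\times 3$ determinant on the RHS along its first row. By Lemma~\ref{lm_detcup1} each of the three $2\times 2$ minors equals $2M$ evaluated on the appropriate triple, so the RHS becomes
\[
\frac{1}{3}\bra{\delta f_{pq}\, M_{prs} - \delta f_{pr}\, M_{pqs} + \delta f_{ps}\, M_{pqr}}.
\]
Writing each $\delta f$ as a difference and separating the $f_p$-contributions, the coefficient of $f_p$ is $\frac{1}{3}(-M_{prs}+M_{pqs}-M_{pqr})$, which by $(\ast)$ equals $-\frac{1}{3}M_{qrs}$; the remaining terms combine to the same expression as the LHS.

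The computation is purely algebraic and I do not anticipate a real obstacle: the only nontrivial input is the closedness $(\ast)$, which is applied symmetrically on both sides so as to force both into the common symmetric form $\frac{1}{3}\sum_{i=0}^{3}(-1)^i f_{x_i} M_{x_0\ldots\hat{x}_i\ldots x_3}$ with $(x_0,x_1,x_2,x_3)=(p,q,r,s)$. This also explains conceptually why the identity is peculiar to $\strat$ and not to $\ito$: only for $\strat$ does the $f$-weight $\frac{1}{3}(f_p+f_q+f_r)$ interact with $\delta$ so as to produce on each vertex the matching closed cofactor $M_{\cdot\cdot\cdot}$.
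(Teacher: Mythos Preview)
Your proof is correct and takes a genuinely different route from the paper's. The paper first rewrites $6\,\strat_{pqr}$ as a signed sum of six cup products (with $f$ in each of the three positions, antisymmetrized in $g^1,g^2$), then applies the Leibniz rule~\eqref{eq:identity-leibniz-cup1} term by term to obtain $6\,\delta\strat$ as the full alternation of $\delta f\cup\delta g^1\cup\delta g^2$, which it recognizes as the determinant with \emph{consecutive} increments (columns $pq,qr,rs$); a final column operation converts to the stated form. Your argument instead keeps $\strat$ in the factored form $\tfrac13(f_p+f_q+f_r)M_{pqr}$, expands $\delta\strat$ directly, and uses the closedness $\delta M=0$ to collapse each vertex coefficient to a single cofactor $M$; on the RHS you Laplace-expand along the first row and use $\delta M=0$ once more. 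The underlying algebraic input is the same (the closedness of $M$ is exactly $\delta(\delta g^1\cup\delta g^2)=0$), but your organization is more elementary---no six-term cup-product rewriting and no column operations---and it exposes the symmetric intermediate form $\tfrac13\sum_{i}(-1)^i f_{x_i}M_{x_0\ldots\hat x_i\ldots x_3}$, which also makes transparent why the identity is specific to the symmetric averaging in $\strat$. The paper's route, on the other hand, stays entirely within the cup-product calculus and yields the ``consecutive increment'' determinant as a byproduct, which may be handy in other contexts.
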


\begin{proof}
By Lemma~\ref{lm_detcup1} one has
\begin{align*}
6& \strat_{pqr} \\
& \quad = f_p \det \bra{ \begin{array}{ll} \delta g^1_{p q} & \delta g^1_{qr} \\
\delta g^2_{pq} & \delta g^2_{qr} \end{array}} + f_q \det \bra{ \begin{array}{ll} \delta g^1_{p q} & \delta g^1_{qr} \\
\delta g^2_{pq} & \delta g^2_{qr} \end{array}}
+ f_r \bra{ \begin{array}{ll} \delta g^1_{p q} & \delta g^1_{qr} \\
\delta g^2_{pq} & \delta g^2_{qr} \end{array}}\\
& \quad = (f \cup \delta g^1 \cup \delta g^2 -
f \cup \delta g^2 \cup \delta g^1)_{pqr}
+
(\delta g^1 \cup f \cup \delta g^2 -  \delta g^2 \cup f \cup \delta g^1)_{pqr} \\
& \qquad + (\delta g^1 \cup \delta g^2 \cup f - \delta g^2 \cup \delta g^1 \cup f)_{pqr}.
\end{align*}
Hence,
\[\begin{split} 6 (\delta \strat)_{pqrs} &=   (\delta f \cup \delta g^1 \cup \delta g^2 - \delta f \cup \delta g^2 \cup \delta g^1)_{pqrs} + \\
&\qquad (- \delta g^1 \cup \delta f \cup \delta g^2 + \delta g^2 \cup \delta f \cup \delta g^1)_{pqrs} + \\
& \qquad (\delta g^1 \cup \delta g^2 \cup \delta  f - \delta g^2 \cup \delta g^1 \cup \delta f)_{pqrs}\\
& = \det\bra{ \begin{array}{lll}
\delta f_{pq} & \delta f_{qr} & \delta f_{rs}\\
\delta g^1_{pq} & \delta g^1_{qr} & \delta g^1_{rs}\\
\delta g^2_{pq} & \delta g^2_{qr} & \delta g^2_{rs}\\
\end{array}} = \frac 1 6 \det\bra{ \begin{array}{lll}
\delta f_{pq} & \delta f_{pr} & \delta f_{ps}\\
\delta g^1_{pq} & \delta g^1_{pr} & \delta g^1_{ps}\\
\delta g^2_{pq} & \delta g^2_{pr} & \delta g^2_{ps}\\
\end{array}},
\end{split}
\]
where the latter identity follows by adding the first column to the second one and subsequently the second column to the third one.
\end{proof}

\section{Riemann summation over dyadic partitions}\label{sec_dyaInt1}

Recall~\cite{SteTrev18a} the \emph{dyadic decomposition} of a $2$-simplex $[p_0 p_1 p_2]\in \simp^2(D)$
\[ \dya [p_0 p_1 p_2] := [q_0 q_1 q_2] + [q_1 q_0 p_2] + [q_2 p_1 q_0]+ [p_0 q_2 q_1],\]
where $q_i := (p_{j} + p_\ell)/2$ for $\cur{i,j,\ell} = \cur{0,1,2}$. Write also $\cut [p_0 p_1] := [p_0 q] +[q p_1]$ and
  $\fill [p_0 p_1] := [p_0 q p_1]$, with $q := (p_0 +p_1)/2$ (naturally extended to chains).

For $n \in \N$ define the $n$-th \emph{Stratonovich sum} $\strat^n$, the \emph{side corrector} $S^n$ as well as the
\emph{It\^{o} sum} $\ito^n$
respectively by the formulae
\begin{equation}\label{def_omnSn}
\begin{aligned}
 \strat^n_{pqr}  & := \ang{ \dya^n [pqr], \strat}, \quad
S^n_{pq} := \sum_{i=0}^{n-1} \ang{ \fill \cut^i [pq], \strat},\\
 \ito^n_{pqr}  & := \ang{ \dya^n [pqr], \ito}.
\end{aligned}
\end{equation}

\begin{lemma}\label{lm_Vnlimcont1}
One has
\begin{eqnarray}
\label{eq_incrSn1}
|S^{n+1}_{pq} - S^n_{pq}| \le C\norm{f}_\infty [\delta g^1]_{\beta_1} [\delta g^2]_{\beta_2} \diam([pq])^{\beta_1+\beta_2} 2^{n(1- \beta_1-\beta_2)},\\
\label{eq_incrVn1}
 \begin{aligned} |\langle [pqr], (\strat^n - \delta S^n) & - (\strat^{n+1} - \delta S^{n+1})\rangle| \\
  & \le C [\delta f]_{\alpha}[\delta g^1]_{\beta_1} [\delta g^2]_{\beta_2}  \diam([pqr])^{\alpha+\beta_1+\beta_2 } 2^{n(2-\alpha- \beta_1-\beta_2)}
  \end{aligned}
\end{eqnarray}
with $C>0$ a universal constant. 
In particular, if $\alpha+\beta_1+\beta_2>2$, then
\begin{equation}\label{eq_defSV}
\begin{aligned}
 S_{pq}& :=  \lim_{n \to \infty} S^n_{pq},\\
V_{pqr} & := \lim_{n \to \infty} \strat^n_{pqr} = \lim_{n \to \infty} (\strat^n _{pqr}- \delta S^n_{pqr}) + \delta S^n_{pqr}
\end{aligned}
\end{equation}
are well defined continuous alternating germs
with
\[ S_{pq} \colon C^0 \times C^{\beta_1} \times C^{\beta_2}  \to \R, \quad  \quad  V_{pqr} \colon C^{\alpha} \times C^{\beta_1} \times C^{\beta_2} \to \R\]
continuous and
\begin{eqnarray}
\label{eq_estlimS1a}
| S^n_{pq}-S_{pq}|\leq  \norm{f}_\infty [\delta g^1]_{\beta_1} [\delta g^2]_{\beta_2} \diam([pq])^{\beta_1+\beta_2} 2^{n(1- \beta_1-\beta_2)},\\
\label{eq_estlimom1a}
\begin{aligned}
|\strat^n_{pqr}-V_{pqr} & -\delta(S^n-S)_{pqr}| \leq \\
&  C [\delta f]_{\alpha}[\delta g^1]_{\beta_1} [\delta g^2]_{\beta_2}  \diam([pqr])^{\alpha+\beta_1+\beta_2 } 2^{n(2-\alpha- \beta_1-\beta_2)},
\end{aligned}\\
\label{eq_estlimom1b}
|\strat^n_{pqr}-V_{pqr}| \leq C \|f\|_{\alpha}[\delta g^1]_{\beta_1} [\delta g^2]_{\beta_2}  \diam([pq])^{\beta_1+\beta_2} 2^{n(1- \beta_1-\beta_2)}.
\end{eqnarray}
\end{lemma}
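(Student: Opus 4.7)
The plan is to prove the two increment estimates \eqref{eq_incrSn1} and \eqref{eq_incrVn1} by dyadic accounting, then extract the limit statements by summing the resulting geometric series.

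For \eqref{eq_incrSn1}, the telescoping structure of $S^n$ gives immediately
\[
S^{n+1}_{pq} - S^n_{pq} = \ang{\fill\cut^n[pq], \strat},
\]
which is a sum of $2^n$ Stratonovich germs on (degenerate) triangles of diameter at most $\diam([pq])/2^n$. Applying the size estimate \eqref{eq_estom1a} term by term and adding yields \eqref{eq_incrSn1} with a universal constant.

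For \eqref{eq_incrVn1}, I first linearize the coboundary: since $\fill\cut^i$ is linear on $1$-chains and $(\delta S^n)_{pqr}=\ang{\partial[pqr],S^n}$, one has $(\delta S^n)_{pqr}=\sum_{i=0}^{n-1}\ang{\fill\cut^i(\partial[pqr]),\strat}$, hence $(\delta S^{n+1}-\delta S^n)_{pqr}=\ang{\fill\cut^n\partial[pqr],\strat}$, and the quantity to estimate becomes $\ang{\Delta_n,\strat}$, where
\[
\Delta_n:=(\dya^{n+1}-\dya^n)[pqr]-\fill\cut^n\partial[pqr].
\]
The crux is to realize $\Delta_n$ as the boundary of an explicit, controlled $3$-chain. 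Using the identities $\partial\dya=\cut\partial$ (verified by direct expansion of the four subtriangles) and $\partial\fill=\cut-\mathrm{Id}$ on $1$-chains, one checks $\partial\Delta_n=0$. Writing $\dya^n[pqr]=\sum_T T$, the linearity of $\fill$ together with cancellation of interior sides in $\sum_T\partial T$ gives $\sum_T \fill\partial T=\fill\cut^n\partial[pqr]$, so
\[
\Delta_n=\sum_T\bra{\dya T-T-\fill\partial T},
\]
each summand being a $2$-cycle supported in $\conv T$. Coning each such $2$-cycle over the initial vertex of $T$ yields a bounded number of $3$-simplices of diameter at most $\diam(T)\le \diam([pqr])/2^n$; the total $3$-chain $W_n$ satisfies $\partial W_n=\Delta_n$ and contains $O(4^n)$ such $3$-simplices. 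Then $\ang{\Delta_n,\strat}=\ang{\partial W_n,\strat}=\ang{W_n,\delta\strat}$, and applying the coboundary estimate \eqref{eq_estdom1b} to each $3$-simplex of $W_n$ and summing gives \eqref{eq_incrVn1}.

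With both increment bounds in hand, the condition $\alpha+\beta_1+\beta_2>2$ (which, combined with $\alpha\le 1$, forces $\beta_1+\beta_2>1$) makes the geometric series $\sum_{k\ge n}2^{k(2-\alpha-\beta_1-\beta_2)}$ and $\sum_{k\ge n}2^{k(1-\beta_1-\beta_2)}$ convergent, so $\{S^n_{pq}\}$ and $\{(\strat^n-\delta S^n)_{pqr}\}$ are Cauchy and define $S$, $V$ as in \eqref{eq_defSV}; the quantitative bounds \eqref{eq_estlimS1a} and \eqref{eq_estlimom1a} follow from the usual telescoping estimate. The bound \eqref{eq_estlimom1b} is then obtained by the triangle inequality, combining \eqref{eq_estlimom1a} with the control $|\delta(S^n-S)_{pqr}|\le 3\sup_{e\in\partial[pqr]}|(S^n-S)_e|$ coming from \eqref{eq_estlimS1a}, using $\|f\|_\alpha\ge [\delta f]_\alpha,\|f\|_\infty$ to unify the two prefactors. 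Alternation of $V$ is inherited from the alternating property of $\strat$ through $\strat^n$; joint continuity of $(f,g^1,g^2)\mapsto (S_{pq},V_{pqr})$ in $C^0\times C^{\beta_1}\times C^{\beta_2}$ and $C^\alpha\times C^{\beta_1}\times C^{\beta_2}$ respectively follows from the multilinear dependence together with the uniform-in-$n$ quantitative bounds.

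The main obstacle is the combinatorial/homological step identifying $\Delta_n$ as $\partial W_n$ with the correct $4^n$-type count and $2^{-n}\diam([pqr])$ diameter of the constituent $3$-simplices; everything else is algebraic bookkeeping and summation of geometric series.
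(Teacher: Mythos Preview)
Your argument is correct and mirrors the paper's own, which factors the result through the abstract sewing Lemma~\ref{lm_Vnlimcont1a} in the appendix; the proof there performs the same per-triangle decomposition and produces a $3$-chain $\rho(T)$ playing exactly the role of your cone of $\dya T-T-\fill\partial T$. One small imprecision worth flagging: the identities $\partial\dya=\cut\partial$ and $\sum_T\fill\partial T=\fill\cut^n\partial[pqr]$ are not literal equalities in $\chain^\bullet(D)$, because shared interior edges of the dyadic pieces appear as pairs $[ab]$, $[ba]$ rather than as $[ab]$, $-[ab]$; consequently $\dya T-T-\fill\partial T$ is not a genuine $2$-cycle and its cone $W_T$ over $v$ satisfies only $\partial W_T=C_T-(v*\partial C_T)$ with $\partial C_T\neq 0$. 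This does not spoil the argument, since every surplus term is of the form $S+\sigma S$ for an odd permutation $\sigma$ of the vertices and hence vanishes when paired with the alternating germ $\strat$, so $\ang{\Delta_n,\strat}=\ang{W_n,\delta\strat}$ still holds; the paper handles the same issue by invoking the alternating property of $\omega$ explicitly at the corresponding step.
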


\begin{remark}\label{rem_estom1b}
As one easily deduces from the proof, in view of the Remark~\ref{rem_estom1a}, one has, with the notation
of the latter, that
in fact $S_{pq}$ itself may be defined over the larger space
$M\times C^{\beta_1}\times C^{\beta_2}$ and is
continuous there when just $\beta_1+\beta_2>1$.
\end{remark}

\begin{proof}
We apply Lemma~\ref{lm_Vnlimcont1a} to our germ $\strat$ (which is continuous
and alternating by construction) recalling
that it satisfies both ~\eqref{eq_omgam1} and~\eqref{eq_omgam2} with
\begin{align*}
\gamma_1 & :=\beta_1+\beta_2>1, \quad C_1  :=\norm{f}_\infty [\delta g^1]_{\beta_1} [\delta g^2]_{\beta_2},\\
\gamma_2 & :=\alpha+\beta_1+\beta_2>2, \quad C_2  :=8[f]_\alpha [\delta g^1]_{\beta_1} [\delta g^2]_{\beta_2}
\end{align*}
in view of Lemma~\ref{lm_estomgam1}.
This gives~\eqref{eq_incrSn1} and~\eqref{eq_incrVn1}, as well as
the existence of limit germs
alternating continuous $S$ and $V$ in~\eqref{eq_defSV}
satisfying~\eqref{eq_estlimS1a},~\eqref{eq_estlimom1a} and~\eqref{eq_estlimom1b}.
Finally, the continuity of  $S_{pq}$ (with fixed $[pq]$) as a functional
follows from~\eqref{eq_incrSn1} and implies the continuity of
$\delta S_{pqr} \colon C^0 \times C^{\beta_1} \times C^{\beta_2}  \to \R$. Continuity of
\[
V_{pqr}-\delta S_{pqr} := \lim_{n \to \infty} (\strat^n _{pqr}- \delta S^n_{pqr}) \colon C^{\alpha} \times C^{\beta_1} \times C^{\beta_2}\to \R
\]
follows from~\eqref{eq_incrVn1}, hence implying the continuity of $V$, and therefore concluding the proof.
\end{proof}

We will need also the following Lemma already formulated
in~\cite[example~4.7]{SteTrev18a}.

\begin{lemma}
If $\beta_1 = \beta_2 = 1$, then
\[
V_{pqr} =\int_{[pqr]} f \d g^1 \wedge \d g^2 = \int_{[pqr]} f \det( \nabla g^1, \nabla g^2) .
\]
\end{lemma}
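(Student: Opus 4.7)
Plan. The strategy is a direct Riemann-sum computation exploiting the $C^1$ regularity of $g^1, g^2$. On each small simplex $T = [abc]$ of the dyadic partition $\dya^n[pqr]$, the Stratonovich germ $\strat_T$ should equal $f(a)\det(\nabla g^1(a), \nabla g^2(a))$ times the signed area of $T$, up to a remainder small compared to the area of $T$. Summing over $T \in \dya^n[pqr]$ then produces a Riemann sum converging to $\int_{[pqr]} f \det(\nabla g^1, \nabla g^2)\,dx$, and since by Lemma~\ref{lm_Vnlimcont1} the sums $\strat^n_{pqr}$ converge to $V_{pqr}$, the two limits agree.

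First I would set $\delta_n := 2^{-n}\diam([pqr])$ and let $\omega$ denote a joint modulus of continuity on $\conv([pqr])$ for the continuous functions $f$, $\nabla g^1$, $\nabla g^2$. For each $T = [abc] \in \dya^n[pqr]$ the mean value theorem gives
\[
\delta g^i_{ab} = \nabla g^i(a)\cdot(b-a) + r^i_{ab}, \qquad |r^i_{ab}| \leq \omega(\delta_n)\,\delta_n,
\]
and analogously for $\delta g^i_{ac}$. Expanding the $2\times 2$ determinant in the definition of $\strat_T$ by multilinearity in its columns and combining with $\tfrac{1}{3}(f_a+f_b+f_c) = f(a) + O(\omega(\delta_n))$, I would obtain
\[
\strat_T = f(a)\,\det(\nabla g^1(a), \nabla g^2(a))\cdot \tfrac{1}{2}\det(b-a,\,c-a) + \varepsilon_T,
\]
where $|\varepsilon_T| \leq C\,\omega(\delta_n)\,\delta_n^2$ and the constant $C$ depends only on $\|f\|_\infty$ and $\|\nabla g^i\|_\infty$.

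Since $\dya$ preserves orientation, $\tfrac{1}{2}\det(b-a,\,c-a)$ is the signed area of $T$ consistent with that of $[pqr]$, so the main terms form a Riemann sum for $\int_{[pqr]} f \det(\nabla g^1, \nabla g^2)\,dx$ with tag points $\{a_T\}_T$; this converges to the integral by continuity of the integrand. The total error is controlled by
\[
\sum_{T \in \dya^n[pqr]} |\varepsilon_T| \leq C\,\omega(\delta_n)\cdot 4^n \delta_n^2 = C\,\omega(\delta_n)\,\diam([pqr])^2,
\]
which tends to $0$ as $n\to\infty$. Combining with $\strat^n_{pqr} \to V_{pqr}$ from Lemma~\ref{lm_Vnlimcont1} identifies $V_{pqr}$ with the classical integral, and the first equality in the statement follows by definition of the classical $2$-form integral.

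The argument is essentially bookkeeping once the first-order Taylor expansion is in place; the only mildly delicate point is keeping orientations consistent under iterated dyadic subdivision, but this is automatic because $\strat$ is alternating and the signed areas of the four simplices of $\dya[p_0p_1p_2]$ sum to that of $[p_0p_1p_2]$, so no sign ambiguity can arise in the Riemann-sum identification.
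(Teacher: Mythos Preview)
The paper does not actually prove this lemma here; it merely points to \cite[example~4.7]{SteTrev18a}. Your direct Riemann-sum argument is correct and is the natural one: the first-order Taylor expansion of each $g^i$ at the base vertex $a$ of a dyadic sub-simplex $T$ turns the $2\times 2$ determinant into $\det(Dg(a))\det(b-a,c-a)$ plus an error of size $O(\omega(\delta_n)\delta_n^2)$, and combining with $\tfrac{1}{3}(f_a+f_b+f_c)=f(a)+O(\omega(\delta_n))$ gives exactly the decomposition you state. The error bound $\sum_T|\varepsilon_T|\le C\omega(\delta_n)\diam([pqr])^2$ is correct because there are $4^n$ sub-simplices each of diameter $2^{-n}\diam([pqr])$, and the main terms form a genuine Riemann sum since all four pieces of $\dya[p_0p_1p_2]$ carry the same signed area $\tfrac{1}{4}|[p_0p_1p_2]|$ (your check that the signed areas add up is in fact the statement that each has the same sign, since they are congruent and partition the triangle).

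One small remark: your invocation of Lemma~\ref{lm_Vnlimcont1} for the convergence $\strat^n_{pqr}\to V_{pqr}$ is harmless in the paper's standing hypothesis $f\in C^\alpha$ with $\alpha>0$, but it is also unnecessary, since your argument already shows directly that $\strat^n_{pqr}$ is Cauchy and identifies its limit. This makes your proof self-contained and in fact valid for merely continuous $f$, which is how the lemma is used in the proof of Theorem~\ref{th_exist_discrInt1}.
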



We are now at a position to prove the first principal result of this paper.

\begin{theorem}\label{th_exist_discrInt1}
If $\alpha+\beta_1+\beta_2>2$, then
\begin{eqnarray}
\label{eq_limom1}
\lim_{n \to \infty} \strat^n_{pqr} &=&
\displaystyle
 \int_{[pqr]} f \d g^1 \wedge \d g^2\\
\label{eq_limgam1}
&=&\displaystyle  \lim_{n \to \infty} \ito^n_{pqr}. 
\end{eqnarray}
In particular, the latter integral is
\begin{itemize}
\item[(A)] nonatomic, i.e.\
\[
\int_{[pqr]} f \d g^1 \wedge \d g^2=0 \quad \mbox{when $|[pqr]|=0$},
\]
\item[(B)] continuous and alternating in $[pqr]$, and
\item[(C)] additive, in the sense that when
\[
[pqr]=\sum_{i=1}^k \Delta_i + N +\partial R,
\]
where $\Delta_i$ are oriented $2$-simplices, $N$ is a polyhedral $2$-chain consisting of
degenerate $2$-simplices (i.e.\ having area zero), and $R$ is a polyhedral $3$-chain in $\R^2$, then
\[
\int_{[pqr]} f \d g^1 \wedge \d g^2=
\sum_{i=1}^k\int_{\Delta_i}
f \d g^1 \wedge \d g^2.
\]
\end{itemize}
Moreover,
 \begin{equation}
\label{eq_estlimom1}
\left| \strat^n_{pqr} - \int_{[pqr]} f \d g^1 \wedge \d g^2\right| \le C \norm{f}_{\alpha} [\delta g^1]_{\beta_1} [\delta g^2]_{\beta_2} \diam([pqr])^{\beta_1+\beta_2} 2^{n(1- \beta_1-\beta_2)},
\end{equation}
\begin{equation}
\label{eq_estlimgam1}
\abs{ \ito^n_{pqr} - \int_{[pqr]} f \d g^1 \wedge \d g^2} \le C \norm{f}_{\alpha} [\delta g^1]_{\beta_1} [\delta g^2]_{\beta_2} \diam([pqr])^{\beta_1+\beta_2} 2^{n(1- \beta_1-\beta_2)}
\end{equation}
for some $C=C(\alpha, \beta_1, \beta_2)>0$.
\end{theorem}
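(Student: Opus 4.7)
The existence of $V_{pqr}:=\lim_n \strat^n_{pqr}$, its continuity and alternating property, as well as the estimate \eqref{eq_estlimom1} are already contained in Lemma~\ref{lm_Vnlimcont1} (specifically \eqref{eq_estlimom1b}). The plan is therefore to reduce the theorem to three remaining tasks: (i) identifying $V_{pqr}$ with the Z\"ust integral $\int_{[pqr]} f\,\d g^1\wedge \d g^2$; (ii) showing the It\^o sums $\ito^n$ converge to the same limit with rate \eqref{eq_estlimgam1}; and (iii) deriving the structural properties (A)--(C).

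For (i) I would invoke Z\"ust's uniqueness theorem: the Z\"ust integral is the unique trilinear functional on $C^\alpha\times C^{\beta_1}\times C^{\beta_2}$ that extends the classical integral, is alternating in $(g^1,g^2)$, is nonatomic, and is continuous with respect to pointwise convergence with uniformly bounded H\"older constants. The germ $V$ is trilinear by the trilinear structure of $\strat^n$, and alternating by Lemma~\ref{lm_Vnlimcont1}. For the continuity, note that each $\strat^n$ is a finite sum depending continuously on pointwise values at the vertices of $\dya^n[pqr]$, while \eqref{eq_estlimom1b} controls $|\strat^n - V|$ uniformly on every H\"older ball; combining these yields continuity of $V$ in the required topology. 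Coincidence with the classical integral on smooth data is the lemma preceding the theorem, and nonatomicity follows from the smooth case (where $|[pqr]|=0$ with bounded integrand forces the integral to vanish) transferred to H\"older data by mollifying $(f,g^1,g^2)$ and using the continuity just established. Z\"ust's uniqueness then identifies $V_{pqr}$ with $\int_{[pqr]} f\,\d g^1\wedge \d g^2$.

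For (ii) the key ingredient is the pointwise estimate \eqref{eq_estgam1a}, which gives $|\ito_T - \strat_T|\leq 2[\delta f]_\alpha [\delta g^1]_{\beta_1}[\delta g^2]_{\beta_2}\diam(T)^{\alpha+\beta_1+\beta_2}$ on each sub-simplex $T$. Summing over the $4^n$ sub-simplices of $\dya^n[pqr]$ (each of diameter comparable to $\diam([pqr])\,2^{-n}$) yields
\[ |\ito^n_{pqr} - \strat^n_{pqr}|\leq C[\delta f]_\alpha [\delta g^1]_{\beta_1}[\delta g^2]_{\beta_2}\diam([pqr])^{\alpha+\beta_1+\beta_2}\,2^{n(2-\alpha-\beta_1-\beta_2)}, \]
which vanishes since $\alpha+\beta_1+\beta_2>2$. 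Combined with step~(i), this yields \eqref{eq_limgam1} and \eqref{eq_estlimgam1}.

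For (iii), (B) is exactly Lemma~\ref{lm_Vnlimcont1}, and (A) was established in step~(i). For (C), expanding by linearity of the chain-germ pairing gives $\ang{[pqr], V} = \sum_i V_{\Delta_i} + \ang{N, V} + \ang{R, \delta V}$; the second term vanishes by nonatomicity, and the third because $\delta V = 0$ on $3$-simplices in $\R^2$. The latter holds for smooth data, since in dimension two every smooth $2$-form is automatically closed (all $3$-forms on $\R^2$ vanish), and extends to H\"older data by the same pointwise continuity used in step~(i). The hardest part will be step~(i): smooth functions are not norm-dense in $C^\alpha$, so one cannot simply approximate in H\"older norm, and the identification with the Z\"ust integral requires working with the weaker pointwise-with-bounded-H\"older topology, where the uniformity given by \eqref{eq_estlimom1b} is crucial.
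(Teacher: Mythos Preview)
Your proof is correct and tracks the paper's argument closely for the existence of $V$, the rate \eqref{eq_estlimom1}, and the It\^o convergence via \eqref{eq_estgam1a}. The main divergence is in the identification with the Z\"ust integral and the derivation of (A)--(C).

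The paper takes a shorter path for step~(i): once Lemma~\ref{lm_Vnlimcont1} shows that $(f,g^1,g^2)\mapsto V_{pqr}$ is continuous on $C^\alpha\times C^{\beta_1}\times C^{\beta_2}$ and the preceding lemma gives $V=\int_{[pqr]}f\det(\nabla g^1,\nabla g^2)$ on $C^0\times C^1\times C^1$, one simply notes that the Z\"ust integral is \emph{the} unique continuous extension of the classical integral to $C^\alpha\times C^{\beta_1}\times C^{\beta_2}$, so $V$ must coincide with it. No preliminary check of nonatomicity or of alternation in $(g^1,g^2)$ is needed; those are conclusions, not hypotheses, of the identification. Your route---verifying nonatomicity first by approximation, then invoking a uniqueness statement that lists nonatomicity among its hypotheses---works, but the extra step is unnecessary.

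For (A)--(C), the paper simply cites them as known properties of the Z\"ust integral (theorem~4.10 of~\cite{SteTrev18a}, where the Z\"ust germ is shown to be sewable). Your direct argument for (C), using the tautological chain--germ pairing together with nonatomicity and $\delta V=0$ on $3$-simplices in $\R^2$ (proved for smooth data and extended by continuity), is a valid self-contained alternative that avoids appealing to the external reference. This is the genuine added value of your approach: it makes the structural properties visible from the construction itself rather than importing them.
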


\begin{proof}
By Lemma~\ref{lm_Vnlimcont1}, the limit
\[
V_{pqr}:=\lim_{n \to \infty} \strat^n_{pqr}
\]
exists and is a continuous multilinear functional over $C^{\alpha} \times C^{\beta_1} \times C^{\beta_2}$, and
\[
V_{pqr}(f, g^1, g^2) =\int_{[pqr]} f \d g^1 \wedge \d g^2 := \int_{[pqr]} f \det( \nabla g^1, \nabla g^2)\, dx
\]
when $f\in C^0$, $g^i\in C^1$, $i=1,2$. However the unique continuous extension of the latter functional defined over
$C^0 \times C^1 \times C^1$ to $C^{\alpha} \times C^{\beta_1} \times C^{\beta_2}$ is the Z\"{u}st integral, which implies the claim~\eqref{eq_limom1},~\eqref{eq_estlimom1}.
Properties~(A),~(B) and~(C) are now in fact the properties of the Z\"{u}st integral
(theorem~4.10 of~\cite{SteTrev18a} where they are stated by saying that
the Z\"{u}st germ~\eqref{eq_germzust1} is \emph{sewable}).

The claims~\eqref{eq_limgam1},~\eqref{eq_estlimgam1} follow now from~\eqref{eq_estgam1a}.
\end{proof}

\begin{remark}\label{rem_betterconv1}
One also has the inequality~\eqref{eq_estlimom1a} which can be rewritten, in view of the above Theorem~\ref{th_exist_discrInt1}
as
\begin{equation}\label{eq_betterconv1}
\begin{aligned} \bigg|\strat^n_{pqr}  - \int_{[pqr]}
   &
                     f \d g^1 \wedge \d g^2 - \delta (S^n -S)_{pqr}
\bigg|\\
&
\le C \norm{f}_{\alpha} [\delta g^1]_{\beta_1} [\delta g^2]_{\beta_2}
\diam([pqr])^{\alpha+\beta_1+\beta_2} 2^{n(2-\alpha- \beta_1-\beta_2)}
\end{aligned}
\end{equation}
with $C=C(\alpha, \beta_1, \beta_2)>0$.
Thus, in order to improve the convergence rate one should better approximate $S^n -S$.
This is the case e.g.\ when on the boundary of $[pqr]$ either $f$ is null or one of the $g^i$ is constant: in fact, in these cases $S^n=0$ and hence also $S=0$.
%
\end{remark}

\begin{remark}
The $2$-germ $f\cup \delta g^1\cup\delta g^2$ in general does not provide an integral even when $f$, $g^1$ and $g^2$ are smooth.
In fact, let $f=1$, $g^i(x_1, x_2):= x_i$, $i=1, 2$, $p=(0,0)$, $q=(1,0)$, $r=(0,1)$. Then
$\langle \dya^n [pqr],  f\cup \delta g^1\cup\delta g^2\rangle \to 2|[pqr]|$ while
$\langle \dya^n [pqr], f\cup \delta g^2\cup\delta g^1\rangle \to 0$ as $n\to \infty$, i.e.\ the limit is not alternating.
\end{remark}

\begin{remark}
As mentioned in the introduction, the above theorem allows to define  the integral of a differential $2$-form $\omega = f \d g^1 \wedge \d g^2$ on $\R^n$ over a parameterized H\"older surface $\varphi\colon\Omega\to \R^n$,  $\varphi(x) = (\varphi^i(x))_{i=1}^n$, letting
\[ \int_{\varphi([pqr])} f \d g^1 \wedge \d g^2 := \int_{[pqr]} (f
\circ \varphi) \d (g^1\circ \varphi) \wedge \d (g^2 \circ \varphi),\]
provided that $f\in C^{\alpha}(\R^n)$, $g^i \in C^{\beta_i}(\R^n)$, $i=1, 2$, $\varphi \in C^{\gamma}(\R^2; \R^n)$ with
\[ \gamma(\alpha+\beta_1+\beta_2)>2.\]
Notice however that the above integral differs from the integral obtained partitioning the triangle $[\varphi(p) \varphi(q) \varphi(r)]$  with an order $\diam([pqr])^{\gamma(\beta_1+\beta_2)}$ and not $\diam([pqr])^{\gamma(\alpha+\beta_1+\beta_2)}$, see \cite[proposition~4.29]{SteTrev18a}.
\end{remark}

\begin{corollary}\label{co_dg1dg2is0a}
If there is an $h\in C^{\beta_3}$, $\beta_3\in (0,1]$, such that both
$g^1$ and $g^2$ are $h$-differentiable in the sense
\[
(\delta g^i)_{pq}= a_p^i (\delta h)_{pq}  + o(|p-q|), \quad i=1,2
\]
for every $p\in D$ as $q\to p$, and, moreover,
\begin{eqnarray}
\label{eq_errest1a}
|(\delta g^i)_{pq}- a_p^i (\delta h)_{pq}|\leq C|p-q|^{1+\gamma_i}
\end{eqnarray}
for some $\gamma_i> 1-\beta_3$, $i=1,2$, and $C>0$, then
$\d g^1\wedge \d g^2=0$ in the sense
 \begin{equation*}
 \int_{[pqr]} f \d g^1 \wedge \d g^2=0
\end{equation*}
for every $f\in C^\alpha$ with $\alpha+\beta_1+\beta_2>2$ and every $[pqr]\subset D$.
\end{corollary}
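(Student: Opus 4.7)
The plan is to use Theorem~\ref{th_exist_discrInt1} to reduce the claim to showing that the It\^o sums $\ito^n_{pqr}$ tend to zero, which in turn follows from a direct estimate exploiting the fact that at the ``leading order'' the determinant defining $\ito_\Delta$ has two proportional rows and hence vanishes.

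First, Theorem~\ref{th_exist_discrInt1} and the assumption $\alpha+\beta_1+\beta_2>2$ give
\[
\int_{[pqr]} f\,\d g^1\wedge \d g^2 \;=\; \lim_{n\to\infty}\ito^n_{pqr},
\]
so it suffices to prove $\ito^n_{pqr}\to 0$. Fix $n\in\N$ and a subsimplex $\Delta=[p'q'r']\in \dya^n[pqr]$ and set $d:=\diam(\Delta)\le 2^{-n}\diam([pqr])$. For $u\in\{q',r'\}$ and $i=1,2$ I would substitute
\[
\delta g^i_{p'u}=a^i_{p'}\,\delta h_{p'u}+R^i_{p'u},\qquad |R^i_{p'u}|\le C\,d^{1+\gamma_i},
\]
into the $2\times 2$ determinant defining $\ito_\Delta=\tfrac12 f_{p'}\det\bigl(\delta g^i_{p'u}\bigr)$ and expand multilinearly. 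The ``pure'' term
\[
a^1_{p'}a^2_{p'}\bigl(\delta h_{p'q'}\,\delta h_{p'r'}-\delta h_{p'r'}\,\delta h_{p'q'}\bigr)
\]
vanishes identically, and each of the remaining seven mixed contributions, being of the form $a^k_{p'}\,\delta h\cdot R^\ell$ or $R^1\cdot R^2$, is bounded using $|\delta h_{p'u}|\le [\delta h]_{\beta_3}d^{\beta_3}$ and \eqref{eq_errest1a} by a constant multiple of either $d^{\beta_3+1+\gamma_j}$ ($j\in\{1,2\}$) or $d^{2+\gamma_1+\gamma_2}$. Hence
\[
|\ito_\Delta|\le K\,\|f\|_\infty\,d^{\beta_3+1+\min(\gamma_1,\gamma_2)},
\]
where the $R^1R^2$ contribution has been absorbed since its exponent is larger.

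Summing over the $|\dya^n[pqr]|=4^n$ subsimplices, each of diameter at most $2^{-n}\diam([pqr])$, one obtains
\[
|\ito^n_{pqr}|\;\le\; K\cdot 4^n\cdot \bigl(2^{-n}\diam([pqr])\bigr)^{\beta_3+1+\min(\gamma_1,\gamma_2)}\;=\;K'\,2^{-n\varepsilon},
\]
with $\varepsilon:=\beta_3+\min(\gamma_1,\gamma_2)-1>0$ by the hypothesis $\gamma_i>1-\beta_3$. Letting $n\to\infty$ gives $\ito^n_{pqr}\to 0$, proving the claim.

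The main technical nuisance is the uniform boundedness of the pointwise coefficients $a^i_{p'}$ over $\conv[pqr]$, since the hypothesis only asserts the existence of $a^i_p$ pointwise. This should be extractable from~\eqref{eq_errest1a} combined with the H\"older regularity of $g^i$ and $h$: at points $p'$ where $h$ is not locally constant, one picks $q$ close to $p'$ with $|\delta h_{p'q}|$ comparable to $|p'-q|^{\beta_3}$ to dominate $|a^i_{p'}|$, while at points where $h$ is locally constant the bound~\eqref{eq_errest1a} forces $\delta g^i_{p'q}=O(|p'-q|^{1+\gamma_i})$ so that we may declare $a^i_{p'}:=0$ without affecting the error estimate. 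Alternatively, one may reinforce the statement by assuming $a^i\in L^\infty_{\mathrm{loc}}(D)$ from the outset, which is anyway the natural setting.
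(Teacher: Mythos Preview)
Your argument is correct and follows essentially the same route as the paper: decompose $\delta g^i_{p'u}=a^i_{p'}\delta h_{p'u}+R^i_{p'u}$, expand the $2\times 2$ determinant multilinearly, observe that the pure $\delta h$ contribution vanishes, bound the mixed terms by $d^{1+\gamma+\beta_3}$ (and the pure remainder term by $d^{2+\gamma_1+\gamma_2}$), and conclude by summing over the $4^n$ dyadic pieces. The only cosmetic difference is that the paper works with the Stratonovich germ $\strat$ and expands the determinant by rows (yielding three nonzero cross terms rather than your entrywise expansion), whereas you use the It\^o germ $\ito$; since Theorem~\ref{th_exist_discrInt1} asserts both sums converge to the same integral, this is immaterial.

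Your closing remark about the local boundedness of the coefficients $a^i$ is well taken: the paper's own estimate displays $\|a^1\|_\infty\|a^2\|_\infty$ explicitly without commenting on why these are finite, so you have in fact been more careful than the original on this point.
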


\begin{proof}
Let
\[
\rho_{pq}^i:= (\delta g^i)_{pq}- a_p^i (\delta h)_{pq}.
\]
Then
\begin{equation}\label{eq_detest1a}
\begin{aligned}
\frac 1 2\det \bra{ \begin{array}{ll} \delta g^1_{p q} & \delta g^1_{pr} \\
\delta g^2_{pq} & \delta g^2_{pr} \end{array}}
&= \frac 1 2 a_p^1 a_p^2 \det \bra{ \begin{array}{ll} \delta h_{p q} & \delta h_{pr} \\
\delta h_{pq} & \delta h_{pr} \end{array}} +
\frac 1 2 a_p^1 a_p^2 \det \bra{ \begin{array}{ll} \rho^1_{p q} & \delta h_{pr} \\
\rho^2_{pq} & \delta h_{pr} \end{array}} \\
& \quad + \frac 1 2 a_p^1 a_p^2 \det \bra{ \begin{array}{ll} \delta h_{p q} & \rho^1_{pr} \\
\delta h_{pq} & \rho^2_{pr} \end{array}}
+
\frac 1 2\det \bra{ \begin{array}{ll} \rho^1_{p q} & \rho^1_{pr} \\
\rho^2_{pq} & \rho^2_{pr} \end{array}}.
\end{aligned}
\end{equation}
Letting $\gamma:=\gamma_1\wedge \gamma_2$, from~\eqref{eq_errest1a} we get
\begin{align*}
\left|\det \bra{ \begin{array}{ll} \rho^1_{p q} & \delta h_{pr} \\
\rho^2_{pq} & \delta h_{pr}\end{array}}\right| & \leq 2C [h]_{\beta_3}\diam([pqr])^{1+\gamma+\beta_3},\\
\left|\det \bra{ \begin{array}{ll} \delta h_{p q} & \rho^1_{pr} \\
\delta h_{pq} & \rho^2_{pr} \end{array}}\right| & \leq 2C [h]_{\beta_3}\diam([pqr])^{1+\gamma+\beta_3},\\
\left|\det \bra{ \begin{array}{ll} \rho^1_{p q} & \rho^1_{pr} \\
\rho^2_{pq} & \rho^2_{pr} \end{array}}\right| & \leq 2C \diam([pqr])^{2+\gamma_1+\gamma_2},
\end{align*}
so that by~\eqref{eq_detest1a} one has
\[
|\strat_{pqr}|\leq 2C\|f\|_{\infty}(\|a^1\|_\infty\|a^2\|_\infty[h]_{\beta_3} \diam([pqr])^{1+\gamma +\beta_3} +
\diam([pqr])^{2+\gamma_1+\gamma_2}),
\]
which concludes the proof since $1+\gamma+\beta_3>2$ and $2+\gamma_1+\gamma_2>2$.
\end{proof}

\begin{remark}\label{rm_dg1dg2is0b}
In particular, if $g^1$ is $g^2$-differentiable and, moreover,
\begin{eqnarray}
\label{eq_errest1b}
|(\delta g^1)_{pq}- a_p (\delta g^2)_{pq}|\leq C|p-q|^{1+\gamma}
\end{eqnarray}
for some $\gamma> 1-\beta_2$ and $C>0$, then
$\d g^1\wedge \d g^2=0$.
\end{remark}

\section{General partitions}

Theorem~\ref{th_exist_discrInt1} shows that the integral $\int_{[pqr]}   f \d g^1 \wedge \d g^2$ can be obtained as a limit
of sums of 
the Stratonovich germs over dyadic partitions of the the simplex $[pqr]$. Here we show that it can be obtained by a similar summation of such germs over more general partitions.

\begin{theorem}\label{th_arbpart2}
Assume that the simplex $[pqr]$ be partitioned in a finite number of disjoint simplices $\{\Delta_i\}_{i=1}^N$
not belonging to the sides of $[pqr]$
so that
\begin{equation}\label{eq_genpart1}
[pqr]-\sum_{i=1}^N \Delta_i =\partial P + \sum_{j=1}^M Q_j,
\end{equation}
where $P\in \chain^3(D)$ and each $Q_j\in \simp^2(D)$ is a degenerate simplex reduced to a line segment belonging
to some side of $[pqr]$ such that two sides of each $Q_j$ are sides of some $\Delta_i$ (with opposite direction).
Then
\begin{equation}\label{eq_arbpart2}
\begin{aligned}
\left|\sum_{i=1}^N \right. & \left. \ang{\Delta_i,\strat} - \int_{[pqr]}   f \d g^1 \wedge \d g^2\right| \\
& \le C \norm{f}_{\alpha} [\delta g^1]_{\beta_1} [\delta g^2]_{\beta_2}
\left(\sum_{i=1}^N \diam(\Delta_i)^{\alpha+\beta_1+\beta_2} +\sum_{j=1}^M \diam(Q_j)^{\beta_1+\beta_2}\right).
\end{aligned}
\end{equation}
\end{theorem}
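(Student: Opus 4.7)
The plan is to apply the refined single-simplex bound from Remark~\ref{rem_betterconv1} to each $\Delta_i$ in turn and then assemble the resulting identities using the chain algebra dictated by the decomposition~\eqref{eq_genpart1} and the fact that $\delta^2=0$.

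\textbf{Step 1 (Integral additivity).} I would first apply property~(C) of Theorem~\ref{th_exist_discrInt1} to the chain identity $[pqr]=\sum_i\Delta_i+\partial P+\sum_j Q_j$ to obtain
\[
\int_{[pqr]} f\d g^1\wedge \d g^2 \;=\; \sum_{i=1}^N \int_{\Delta_i} f\d g^1\wedge \d g^2,
\]
since the $Q_j$ are degenerate (so their integrals vanish by nonatomicity) and $\partial P$ is a $3$-chain boundary.

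\textbf{Step 2 (Single-simplex refined bound).} Next, I would use the inequality highlighted in Remark~\ref{rem_betterconv1}, evaluated at $n=0$ (where $S^0=0$, so that $\delta(S^0-S)=\delta S$): for each $i$,
\[
\ang{\Delta_i,\strat} - \int_{\Delta_i}f\d g^1\wedge \d g^2 + \ang{\Delta_i,\delta S} = R_i,
\qquad
|R_i|\le C\,\norm{f}_\alpha[\delta g^1]_{\beta_1}[\delta g^2]_{\beta_2}\,\diam(\Delta_i)^{\alpha+\beta_1+\beta_2}.
\]

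\textbf{Step 3 (Summation and coboundary algebra).} Summing the identity of Step~2 over $i$ and invoking Step~1,
\[
\sum_i \ang{\Delta_i,\strat} - \int_{[pqr]} f\d g^1\wedge \d g^2 \;=\; -\sum_i \ang{\Delta_i,\delta S} + \sum_i R_i.
\]
Since $\delta^2S=0$, one has $\ang{\partial P,\delta S}=\ang{P,\delta^2S}=0$, so~\eqref{eq_genpart1} gives
\[
\sum_i \ang{\Delta_i,\delta S} \;=\; \ang{[pqr],\delta S} - \sum_j \ang{Q_j,\delta S}.
\]
The contributions $\ang{Q_j,\delta S}=\ang{\partial Q_j,S}$ are then controlled termwise by the a priori bound $|S_{pq}|\le C\norm{f}_\alpha[\delta g^1]_{\beta_1}[\delta g^2]_{\beta_2}\,\diam([pq])^{\beta_1+\beta_2}$ provided by Lemma~\ref{lm_Vnlimcont1}, yielding the contribution $\sum_j\diam(Q_j)^{\beta_1+\beta_2}$ appearing on the right-hand side of~\eqref{eq_arbpart2}.

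\textbf{Step 4 (Absorbing the boundary of $[pqr]$).} The remaining piece $\ang{[pqr],\delta S}=S_{qr}-S_{pr}+S_{pq}$ is the one-dimensional side-correction along $\partial[pqr]$. By hypothesis~\eqref{eq_genpart1}, each side of $[pqr]$ is subdivided by the edges of the $\Delta_i$ lying on it, with the $Q_j$ on that side filling the resulting gap as degenerate simplices; combined with the defining property of $S$ as a one-dimensional dyadic sewing (Lemma~\ref{lm_Vnlimcont1}), this lets me rewrite $\ang{[pqr],\delta S}$ as a telescoping sum of $S$-values over the subdivided edges whose sewing-type remainder is again of the order $\sum_j\diam(Q_j)^{\beta_1+\beta_2}$. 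Putting Steps~1--4 together yields~\eqref{eq_arbpart2}.

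\textbf{Main obstacle.} The technical heart of the argument is Step~4: a crude use of the pointwise bound $|S_{pq}|\le C\,\diam([pq])^{\beta_1+\beta_2}$ would leave a harmful $\diam([pqr])^{\beta_1+\beta_2}$ term uncontrolled by the right-hand side of~\eqref{eq_arbpart2}. To eliminate it, one must exploit the sewability of $S$ along $1$-dimensional subdivisions — exactly the mechanism used to construct $S$ in Lemma~\ref{lm_Vnlimcont1} — together with the alternating property of $\strat$ (Lemma~\ref{lm_estomgam1}), so that the boundary mismatch reorganises itself into a sum over the degenerate simplices $Q_j$ alone.
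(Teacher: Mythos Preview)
Your Steps~1--2 are exactly what the paper does: apply the refined estimate of Remark~\ref{rem_betterconv1} at $n=0$ to each $\Delta_i$ and invoke additivity. The divergence begins at Step~3, where you feed the $2$\nobreakdash-chain identity~\eqref{eq_genpart1} into the $2$\nobreakdash-germ $\delta S$ to obtain
\[
\sum_i \ang{\Delta_i,\delta S}=\ang{[pqr],\delta S}-\sum_j\ang{Q_j,\delta S}.
\]
This is algebraically correct, but it manufactures the large term $\ang{[pqr],\delta S}$ (of size $\sim\diam([pqr])^{\beta_1+\beta_2}$) which you then have to destroy in Step~4. The paper never writes this term: it computes $\sum_i\ang{\partial\Delta_i,S^0-S}$ at the level of $1$\nobreakdash-chains, observes that internal edges cancel pairwise, and bounds each surviving boundary edge $e$ individually by $\lvert S_e\rvert\le C\diam(e)^{\beta_1+\beta_2}$. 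Since (under the hypothesis on the $Q_j$) these surviving edges are sides of the $Q_j$'s, one has $\diam(e)\le\diam(Q_j)$, and the estimate~\eqref{eq_arbpart2} follows with no further work.

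Your Step~4 is a genuine gap. You propose to ``telescope'' $S_{pq}$ along the subdivision of $[pq]$ induced by the $\Delta_i$, appealing to the sewing construction of $S$. But $S$ is built in Lemma~\ref{lm_Vnlimcont1} via \emph{dyadic} refinement; the only additivity relation it satisfies is $(\delta S)_{p\,m\,q}=\strat_{p\,m\,q}$ with $m$ the midpoint (cf.~\eqref{eq_Som2}), and for an arbitrary point $a\in[pq]$ one only has $S_{pq}-S_{pa}-S_{aq}=-(\delta S)_{paq}$ with $\lvert(\delta S)_{paq}\rvert$ of order $\diam([pq])^{\beta_1+\beta_2}$, not better. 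So the ``sewing-type remainder'' you invoke is in general the same size as the term you are trying to kill, and the argument does not close. The fix is not to refine Step~4 but to drop it: work at the $1$\nobreakdash-chain level from the start, as the paper does, so that the large side terms $S_{pq},S_{qr},S_{rp}$ never appear.
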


\begin{proof}
The estimate~\eqref{eq_betterconv1} applied to each $\Delta_i$ 
with $n:=0$ gives
\begin{align*}
\left| \ang{\Delta_i,\strat} - \int_{\Delta_i}   f \d g^1 \wedge \d g^2\right. & \left.
- \ang{\Delta_i,\delta(S^0-S)}\right| \\
& \le C \norm{f}_{\alpha} [\delta g^1]_{\beta_1} [\delta g^2]_{\beta_2}
\diam(\Delta_i)^{\alpha+\beta_1+\beta_2}.
\end{align*}
Summing the latter estimates over $i=1,\ldots, N$,
and recalling that
\[
\int_{[pqr]}   f \d g^1 \wedge \d g^2=\sum_{i=1}^N \int_{\Delta_i}   f \d g^1 \wedge \d g^2
\]
in view of~\eqref{eq_genpart1},
we get
\begin{equation}\label{eq_arbpart1}
\begin{aligned}
\left|\sum_{i=1}^N\ang{\Delta_i,\strat} - \int_{[pqr]}   f \d g^1 \wedge \d g^2\right. & \left.  - \sum_{j=1}^M \ang{q_j,S^0-S}\right| \\
 & \le C \norm{f}_{\alpha} [\delta g^1]_{\beta_1} [\delta g^2]_{\beta_2}
\sum_{i=1}^N \diam(\Delta_i)^{\alpha+\beta_1+\beta_2},
\end{aligned}
\end{equation}
where $q_j\in \simp^1(D)$ is the side of $Q_j$ which is not a side of any $\Delta_i$:
in fact, 
when summing the terms
\[
\ang{\Delta_i,\delta(S^0-S)}= \ang{\partial \Delta_i,S^0-S}
\]
over $i$, we have that every side of some simplex of the partition
which is not one of $q_j$ (i.e.\ does not belong to a side of $[pqr]$)
appears in this sum twice and in opposite directions, and hence is cancelled out from this sum.
Moreover, from~\eqref{eq_estlimS1a} applied with $q_j$ instead of $[pq]$ and $n:=0$ we get
\begin{align*}
| \ang{q_j, S^0-S}| & \leq  \norm{f}_\infty [\delta g^1]_{\beta_1} [\delta g^2]_{\beta_2} \diam(q_j)^{\beta_1+\beta_2},
\end{align*}
which together with~\eqref{eq_arbpart1} gives~\eqref{eq_arbpart2} since $\diam q_j = \diam Q_j$.
\end{proof}


\section{Integration over general domains}\label{sec_intOm1}

In section~\ref{sec_dyaInt1} we defined the integral of the ``rough differential form'' $f\d g^1 \wedge \d g^2$ over
an arbitrary oriented simplex $[pqr]$ in the domain of definition of $f$ and $g$. Here we show how the latter
can be naturally extended to more general domains $\Omega\subset\R^2$.

First, consider the case when $\Omega$ is an oriented simple (i.e.\ not self-intersecting) polygon with vertices $a_0,\ldots, a_k$, enumerated according to the orientation of $\Omega$ (say, counterclockwise). We will write in this case $\Omega=[a_0\ldots a_k]$. 
Consider the triangulation of $\Omega$ in two-dimensional simplices
$\{\Delta_i\}_{i=1}^m$ oriented in the same direction of $\Omega$. 
We set then by definition
\begin{equation}\label{eq_delPoly1}
\int_{[a_0\ldots a_k]} f\d g^1 \wedge \d g^2 := \sum_{i=1}^m \int_{\Delta_i} f\d g^1 \wedge \d g^2. 
\end{equation}
The following statement is valid.

\begin{proposition}\label{prop_intPoly1}
	Under conditions of Theorem~\ref{th_exist_discrInt1} for every $b\in \R^2$ one has
	\begin{equation}\label{eq_delPoly2}
	\int_{[a_0\ldots a_k]} f\d g^1 \wedge \d g^2 = \sum_{j=0}^k \int_{[a_ja_{j+1}b]} f\d g^1 \wedge \d g^2, 
	\end{equation}
	where $k+1:=0$. In particular, the definition~\eqref{eq_delPoly1} is correct (i.e.\ independent on the particular triangulation
	$\{\Delta_i\}$), 
	the above integral  
	is alternating
	(i.e. preserves/resp.\ changes sign with odd/resp.\ even permutation of the vertices), 
	nonatomic (i.e. zero on polygons of zero area),
	and the map
	\[
	(f,g^1, g^2) \mapsto \int_{[a_0\ldots a_k]} f\d g^1 \wedge \d g^2
	\]	
	is a continuous multilinear functional over $C^\alpha\times C^{\beta_1}\times C^{\beta_2}$ continuous also in the vertices $a_0,\ldots,a_k$ (i.e.\ continuous with respect to the simultaneous convergence of both functions involved and 
	of the vertices).
\end{proposition}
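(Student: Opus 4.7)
The plan is to prove the identity \eqref{eq_delPoly2} first, after which every remaining claim follows at once. Fix $b\in\R^2$ and pick any triangulation $\{\Delta_i=[p_i q_i r_i]\}_{i=1}^m$ of $\Omega$ oriented consistently with $\Omega$ (such a triangulation exists for any simple polygon by a classical combinatorial argument). For each $i$ a direct computation of the boundary operator gives the chain identity
\[
[p_iq_ir_i]=[p_iq_ib]+[q_ir_ib]+[r_ip_ib]+\partial[bp_iq_ir_i]
\]
in $\chain^2(\R^2)$. Applying the additivity property~(C) of Theorem~\ref{th_exist_discrInt1} with $N=0$ and $R=[bp_iq_ir_i]$ converts this into
\[
\int_{\Delta_i}f\d g^1\wedge\d g^2=\int_{[p_iq_ib]}f\d g^1\wedge\d g^2+\int_{[q_ir_ib]}f\d g^1\wedge\d g^2+\int_{[r_ip_ib]}f\d g^1\wedge\d g^2.
\]

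Summing over $i$ and applying the standard edge-cancellation argument then yields \eqref{eq_delPoly2}. Indeed, every internal edge $[xy]$ of the triangulation is shared by two adjacent triangles with opposite orientations, so the two corresponding terms $\int_{[xyb]}$ and $\int_{[yxb]}$ cancel by the alternating property~(B) of the simplex integral, and what survives is exactly one term for each boundary edge $[a_ja_{j+1}]$, $j=0,\ldots,k$. Since the right-hand side of \eqref{eq_delPoly2} does not involve the chosen triangulation, the definition \eqref{eq_delPoly1} is unambiguous.

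All remaining assertions follow directly from \eqref{eq_delPoly2}, viewed as a finite sum of triangular integrals. Multilinearity and joint continuity in $(f,g^1,g^2)\in C^\alpha\times C^{\beta_1}\times C^{\beta_2}$ are inherited term by term from the estimate \eqref{eq_estlimom1} of Theorem~\ref{th_exist_discrInt1}. Continuity in the vertices $a_0,\ldots,a_k$ follows from the obvious continuity of each $\strat^n_{pqr}$ in its vertices combined with the uniform-in-vertices convergence $\strat^n\to\int$ provided again by \eqref{eq_estlimom1}, which passes continuity to the limit; joint continuity in the functions and vertices simultaneously then follows by multilinearity and the uniformity of the estimates on bounded sets. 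The alternating behavior under reversing the orientation of $\Omega$ is immediate from the alternating character of each triangular integral. Finally, nonatomicity on a degenerate polygon whose vertices all lie on one line is obtained by choosing $b$ on that same line: every triangle $[a_ja_{j+1}b]$ then has zero area, so $\int_{[a_ja_{j+1}b]}=0$ by Theorem~\ref{th_exist_discrInt1}(A).

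The only genuinely nontrivial step is the internal-edge cancellation, which hinges on the alternating property of $\int_{[xyb]}$ under the swap $x\leftrightarrow y$; the rest is bookkeeping. I do not anticipate any major obstacle beyond the care required for checking signs in the initial chain identity and for justifying that classical polygonal triangulation applies to an arbitrary oriented simple polygon in the present setting.
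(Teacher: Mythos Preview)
Your approach is the same as the paper's: cone each triangle of the triangulation over $b$, use that the integral vanishes on boundaries of $3$-chains, and cancel internal edges by the alternating property. Two small corrections are in order. First, the displayed identity $[p_iq_ir_i]=[p_iq_ib]+[q_ir_ib]+[r_ip_ib]+\partial[bp_iq_ir_i]$ is \emph{not} an identity in $\chain^2(\R^2)$: computing $\partial[bp_iq_ir_i]$ gives $[p_iq_ir_i]-[bq_ir_i]+[bp_ir_i]-[bp_iq_i]$, so the right-hand side should carry the simplices $[bq_ir_i],\,-[bp_ir_i],\,[bp_iq_i]$, and your version only emerges \emph{after} applying the alternating integral. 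Consequently property~(C) cannot be invoked with $N=0$ on your identity as written; the paper instead uses directly that $\langle\partial[bpqr],f\d g^1\wedge\d g^2\rangle=0$ and then applies the alternating property, which is what you should do as well. Second, if the triangulation introduces extra vertices on the polygon's boundary edges, then after the internal cancellations you are left with several cone-triangles along each segment $[a_ja_{j+1}]$, not a single one; the paper closes this with an appeal to additivity to merge them into $\int_{[a_ja_{j+1}b]}$, a step you should include (or else restrict explicitly to triangulations using only the polygon's own vertices, which suffices for the independence claim).
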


\begin{proof}
	Writing $\Delta_i:=[\alpha_i^1\alpha_i^2\alpha_i^3]$, one has
	\begin{align*}
	       \sum_{i=1}^m\partial [b\alpha_i^1\alpha_i^2\alpha_i^3] 
	         = \sum_{i=1}^m [\alpha_i^1\alpha_i^2\alpha_i^3] - \sum_{i=1}^m [b\alpha_i^2\alpha_i^3] + \sum_{i=1}^m [b\alpha_i^1\alpha_i^3] - \sum_{i=1}^m [b \alpha_i^1\alpha_i^2],
	\end{align*}
so that taking into account~\eqref{eq_delPoly1}, and recalling that
\[
\left\langle \partial [pqrs], f\d g^1 \wedge \d g^2 \right\rangle =0,
\]
we get
\begin{align*}
\int_{[a_0\ldots a_k]} & f\d g^1 \wedge \d g^2  = \\
& \sum_{i=1}^m \left(\int_{[b\alpha_i^2\alpha_i^3]}f\d g^1 \wedge \d g^2 - \int_{[b\alpha_i^1\alpha_i^3]} f\d g^1 \wedge \d g^2
 + \int_{[b \alpha_i^1\alpha_i^2]} f\d g^1 \wedge \d g^2 \right)
= \\
& \sum_{i=1}^m \left(\int_{[\alpha_i^1\alpha_i^2 b]}f\d g^1 \wedge \d g^2 + \int_{[\alpha_i^2\alpha_i^3 b]}f\d g^1 \wedge \d g^2
+ \int_{[\alpha_i^3\alpha_i^1 b]} f\d g^1 \wedge \d g^2 
\right),
\end{align*}
the latter equality being due to the alternating property of the integral. Every one-dimensional edge
$[pq]$ of the triangulation not belonging to the boundary of $\Omega$ belongs to exactly two simplices of the triangulation
leading to two terms in the right-hand side of the latter equality, $\int_{[p q b]}f\d g^1 \wedge \d g^2$ and
$\int_{[q p b]}f\d g^1 \wedge \d g^2$ which cancel out due to the alternating property of the integral. Therefore, 
the right-hand side of the latter equality contains only terms of the type $\int_{[p q b]}f\d g^1 \wedge \d g^2$
with $[pq]$ belonging to the boundary of $\Omega$; due to the additivity property of the integral they all
sum up to the right-hand side of~\eqref{eq_delPoly2}. The rest of the statement follows now immediately from~\eqref{eq_delPoly2}
together with the respective properties of the integral over simplices.
\end{proof}

If $\Omega$ is a finite union of disjoint simple oriented polygons $\Omega_1,\ldots,\Omega_l$ then it is natural
to set
\begin{equation}\label{eq_delPoly3}
 \int_{\Omega} f\d g^1 \wedge \d g^2 := \sum_{i=1}^l \int_{\Omega_i} f\d g^1 \wedge \d g^2, 
\end{equation}
so that the above integral clearly exists under the conditions of Theorem~\ref{th_exist_discrInt1}.

Finally, we able to define naturally the $\int_{\Omega} f\d g^1 \wedge \d g^2$ for quite 
general bounded open sets $\Omega\subset \R^2$ with a chosen orientation.
To this aim for every $k\in\N$ let $P_k$ be the union of open squares with vertices in $2^{-k}\Z^2$ contained in $\Omega$.
Clearly this is a bounded open set which is a finite union of simple polygons. We assume all $P_k$ to be oriented in the same
way as $\Omega$.
The following result holds true.

\begin{theorem}\label{th_intOm1}
	Under conditions of Theorem~\ref{th_exist_discrInt1}, if additionally
$\Omega\subset \R^2$ is a bounded open set satisfying 
\begin{equation}\label{eq_boxbet1}
	\overline{\mathrm{dim}}_{\mathrm{box}} \partial \Omega < \beta_1+\beta_2,
\end{equation}	
	where $\overline{\mathrm{dim}}_{\mathrm{box}}$ stands for the upper box-counting dimension,
	there is the limit
\begin{equation}\label{eq_intlimOm1}
	\int_{\Omega} f\d g^1 \wedge \d g^2 := \lim_k \int_{P_k} f\d g^1 \wedge \d g^2.
\end{equation}	
	In this case 
	the map
	\[
	(f,g^1, g^2) \mapsto \int_{\Omega} f\d g^1 \wedge \d g^2
	\]	
	is a continuous multilinear functional over $C^\alpha\times C^{\beta_1}\times C^{\beta_2}$.
\end{theorem}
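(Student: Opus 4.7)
The plan is to prove that the sequence $\int_{P_k} f\d g^1\wedge\d g^2$ is Cauchy in $\R$ by controlling the increments $\int_{P_{k+1}}-\int_{P_k}$ via the box-counting hypothesis, and then to derive continuity and multilinearity of the limit from uniform convergence on bounded subsets of $C^\alpha\times C^{\beta_1}\times C^{\beta_2}$.

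First I would observe that $P_k\subset P_{k+1}$, since every dyadic square of side $2^{-k}$ contained in $\Omega$ subdivides into four dyadic squares of side $2^{-(k+1)}$ still contained in $\Omega$. The closure of $P_{k+1}\setminus P_k$ is then covered by those dyadic squares of side $2^{-k}$ that meet $\partial\Omega$: any such parent square must intersect both $\Omega$ (having at least one child in $P_{k+1}$) and $\Omega^c$ (otherwise it would already belong to $P_k$), and hence must meet $\partial\Omega$. The hypothesis~\eqref{eq_boxbet1} provides, for any $\epsilon>0$ chosen small enough that $d:=\overline{\mathrm{dim}}_{\mathrm{box}}\partial\Omega+\epsilon$ still satisfies $d<\beta_1+\beta_2$, a constant $C_\epsilon$ with $N_k\le C_\epsilon 2^{kd}$, where $N_k$ counts these boundary-meeting parent squares.

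Next I would decompose $\int_{P_{k+1}}-\int_{P_k}$ as a sum of integrals over the dyadic squares of side $2^{-(k+1)}$ making up $P_{k+1}\setminus P_k$; there are at most $4N_k$ of them, and each one triangulates into two consistently oriented $2$-simplices of diameter at most $\sqrt{2}\cdot 2^{-(k+1)}$. Combining~\eqref{eq_estom1a} with the error estimate~\eqref{eq_estlimom1} at $n=0$ yields the triangle-level bound
\[
\left|\int_{[pqr]} f\d g^1\wedge\d g^2\right| \le C\norm{f}_\alpha [\delta g^1]_{\beta_1}[\delta g^2]_{\beta_2}\diam([pqr])^{\beta_1+\beta_2},
\]
so summing over all relevant triangles gives
\[
\left|\int_{P_{k+1}} f\d g^1\wedge\d g^2 - \int_{P_k} f\d g^1\wedge\d g^2\right| \le C'\norm{f}_\alpha [\delta g^1]_{\beta_1}[\delta g^2]_{\beta_2}\, 2^{k(d-\beta_1-\beta_2)}.
\]
Since $d<\beta_1+\beta_2$, the increments are geometrically summable, the sequence is Cauchy, and the limit in~\eqref{eq_intlimOm1} exists; the same telescoped estimate shows that $\int_{P_k}\to\int_\Omega$ uniformly on bounded subsets of $C^\alpha\times C^{\beta_1}\times C^{\beta_2}$. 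Since each $\int_{P_k}$ is a continuous multilinear functional on this product space by Proposition~\ref{prop_intPoly1} and~\eqref{eq_delPoly3}, the uniform convergence transfers both properties to the limit.

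The hard part will be the combinatorial bookkeeping in the first step: one must verify carefully that $\overline{P_{k+1}\setminus P_k}$ is indeed covered by parent dyadic squares meeting $\partial\Omega$, and one must fit $P_k$ — which may be multiply connected, i.e.\ a polygonal region with holes — into the framework of~\eqref{eq_delPoly3}, for instance by cutting each connected component along dyadic edges into a disjoint union of simple oriented polygons (with inner boundaries contributing reversed orientations). Once these bookkeeping issues are settled, the rest reduces to a routine telescoping argument combined with a box-counting estimate.
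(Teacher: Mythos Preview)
Your proposal is correct and follows essentially the same approach as the paper: cover the increment $P_{k+1}\setminus P_k$ by fine dyadic squares whose coarse parents meet $\partial\Omega$, bound their number via the box-counting hypothesis, apply the triangle-level estimate obtained from~\eqref{eq_estom1a} and~\eqref{eq_estlimom1} at $n=0$, and sum a geometric series to get Cauchy plus uniform convergence on bounded sets. Your telescoping over consecutive indices is in fact slightly cleaner than the paper's direct bound on $P_{k+m}\setminus P_k$, and you rightly flag the bookkeeping needed when $P_k$ is multiply connected---a point the paper passes over.
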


\begin{proof}
	Take a $d\in (\overline{\mathrm{dim}}_{\mathrm{box}} \partial \Omega, \beta_1+\beta_2)$. The set $P_{k+m}\setminus P_k$ can be
	naturally covered by triangles by dividing along the diagonal each of the squares of sidelength  $2^{- (k+m)}$ 
	with disjoint interiors  composing it. The total number of such squares is estimated from above by the number of squares
	with vertices in $2^{-k}\Z^2$ touching $\partial\Omega$, hence by $C (2^k)^d$ where $C>0$ depends only on $\partial\Omega$. Hence the number of triangles in the chosen cover of $P_{k+m}\setminus P_k$ is estimated by $2C (2^k)^d (2^m)^2$.
	Each triangle $\Delta$ in this cover has diameter $D:=2^{- (k+m)}$, and therefore by~\eqref{eq_estlimom1} together with~\eqref{eq_estom1a} one has
	\[
	\left|\int_{\Delta} f\d g^1(x)\wedge \d g^2(x)\right|\leq C' D^{\beta_1+\beta_2},
	\]
	where $C'>0$ depends only on $\|f\|_\alpha$, $[g^1]_{\beta_1}$, $[g^2]_{\beta_2}$. Thus 
	\begin{align*}
	\left|\int_{P_{k+m}} f\d g^1(x)\wedge \d g^2(x) - \int_{P_{k}} f\d g^1(x)\wedge \d g^2(x)\right| &\leq 2C(2^k)^d (2^m)^2 
	C' 2^{-(k+m)(\beta_1+\beta_2)}\\
	&\to 0\quad\mbox{as $k\to+\infty$}
	\end{align*}
	(even uniformly over bounded sets of $C^\alpha\times C^{\beta_1}\times C^{\beta_2}$) because of the assumption $\beta_1+\beta_2>d$. This shows that the sequence of integrals $\{\int_{P_k} f\d g^1 \wedge \d g^2\}_k$ is Cauchy, and hence the existence of the limit as claimed. This limit is clearly multilinear on $(f, g^1, g^2)$ since so is the integral over simple polygons, and its continuity
	over $C^\alpha\times C^{\beta_1}\times C^{\beta_2}$ follows  from that of the integral over polygons and of the fact that the above convergence is uniform over bounded sets of $C^\alpha\times C^{\beta_1}\times C^{\beta_2}$. 
\end{proof}

\begin{remark}
Clearly under the condition~\eqref{eq_boxbet1} the integral $\int_{\Omega} f\d g^1 \wedge \d g^2$ coincides with
the classical one if $f$, $g^1$ and $g^2$ are smooth.
\end{remark}

\begin{remark}
Combining Theorem~\ref{th_arbpart2} and Proposition~\ref{prop_intPoly1}, we have
that the integral $\int_{\Omega} f\d g^1 \wedge \d g^2$ in Theorem~\ref{th_intOm1}
may be also approximated directly by sums of either Stratonovich or It\^{o} germs over sufficiently fine 
triangulations of $P_k$ (for sufficiently large $k$). 
\end{remark}

\begin{remark}
	If in the construction used in Theorem~\ref{th_intOm1} one substitutes the dyadic grids $2^{-k}\Z^2$ with some other ones
	(e.g. rotated and/or with sidelength of the cubes converging to zero with different speed), one would
	obtain under conditions of Theorem~\ref{th_intOm1} in exactly the same way the existence of the limit in~\eqref{eq_intlimOm1} (but now with different meaning of $P_k$), and its continuity and multilinearity 
	over $C^\alpha\times C^{\beta_1}\times C^{\beta_2}$. Since this limit for smooth $f$, $g^1$ and $g^2$ still coincides with the classical integral, we get therefore that it also coincides with $\int_{\Omega} f\d g^1 \wedge \d g^2$ over the whole $C^\alpha\times C^{\beta_1}\times C^{\beta_2}$, and hence the role of the particular sequence of grids in the definition~\eqref{eq_intlimOm1} is not essential.
\end{remark}

\section{Stratonovich type integrals of more irregular forms}\label{sec_strat_irreg1}
 
We consider in this section the integrals of the type
\[
\int_{\Omega} F(x, g(x))\d g^1(x)\wedge \d g^2(x)
\]
defined for H\"{o}lder functions
$g:=(g^1, g^2)\colon \R^2\to\R^2$ when $F\colon
\R^2\times\R^2\to \R$. In fact, it happens that if one uses a
Stratonovich-type construction, i.e.\ employs
alternating germs $\strat_{pqr}$
defined for $f(x):=F(x, g(x))$, then the above integral may be defined under much less restrictive requirements than those given by
Theorem~\ref{th_exist_discrInt1}. In particular, we are able to trade regularity of $g$ for the higher regularity of $F$.
Here we only limt ourselves to the case when the domain of integration $\Omega\subset\R^2$ is an oriented simplex (i.e.\ triangle $[pqr]$), since the case of more general domains can be easily treated as in section~\ref{sec_intOm1}. 

\begin{theorem}\label{th_irregInt1a}
Let $F\colon \R^2\times\R^2 \to \R$ such that
\begin{itemize}
\item[(i)] $u\mapsto F(u, \cdot) \in C(\R^2;C^{1,\gamma}(\R^2))$, $\gamma\in (0,1]$,
\item[(ii)]
$u\mapsto F(\cdot, u) \in C(\R^2; C^{\alpha})$,
\end{itemize}
 and let
$f(x):= F(x, g(x))$, where $g(x):=(g^1(x), g^2(x))$.
If $\beta_1+\beta_2>1$ and
\begin{equation}\label{eq_gambet12}
\begin{aligned}
\alpha+\beta_1+\beta_2 >2,\\
(1+\gamma)\beta_i+\beta_1+\beta_2>2, \quad i=1,2,
\end{aligned}
\end{equation}
then, with the notation of~\eqref{def_omnSn} the limit
\[
V_{pqr}(g):=\lim_{n \to \infty} \strat^n_{pqr}
\]
exists. Moreover, it is continuous and alternating as a function of $[pqr]$
fixed $g^1$ and $g^2$,
nonatomic in the sense that
\[
V_{pqr}(g)=0 \quad \mbox{when $|[pqr]|=0$},
\]
and continuous as the functional of $g$, so that it is reasonable to denote
\[
\int_{[pqr]} F(x, g(x))\d g^1(x)\wedge \d g^2(x) := V_{pqr}(g).
\]
\end{theorem}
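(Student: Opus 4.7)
My plan is to apply the alternating sewing lemma (Lemma~\ref{lm_Vnlimcont1a}) to the Stratonovich germ $\strat_{pqr}$ built from $f(x) := F(x, g(x))$. Since $\strat$ is alternating by construction (Lemma~\ref{lm_estomgam1}) and $\|f\|_\infty$ is controlled by $F$ and $g$, the one-scale bound
\[
|\strat_{pqr}| \le \|f\|_\infty [\delta g^1]_{\beta_1}[\delta g^2]_{\beta_2}\diam([pqr])^{\beta_1+\beta_2}
\]
has exponent $>1$ by hypothesis. The heart of the argument is to produce a two-scale estimate $|(\delta\strat)_{pqrs}| \le C\,\diam([pqrs])^{\gamma_2}$ with $\gamma_2>2$; existence, continuity, alternation, and nonatomicity of $V_{pqr}(g)$ as a function of $[pqr]$ will then follow from Lemma~\ref{lm_Vnlimcont1a}, while continuity as a functional of $g$ will follow from the stability Lemma~\ref{lm_Vnlimstab1}.

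The key tool is the determinantal identity in Lemma~\ref{lm_StratDet1}, which expresses $6(\delta\strat)_{pqrs}$ as a $3\times 3$ determinant with first row $(\delta f_{pq},\delta f_{pr},\delta f_{ps})$ and the other two rows consisting of the corresponding increments of $g^1$ and $g^2$. Since $p$ is fixed inside the determinant, subtracting $\partial_{y_1}F(p,g(p))$ times row $2$ and $\partial_{y_2}F(p,g(p))$ times row $3$ from row $1$ preserves its value and replaces $\delta f_{pu}$ by
\[
A_{pu} := F(u,g(u))-F(p,g(p)) - \nabla_y F(p,g(p))\cdot(g(u)-g(p)),
\]
the cancellation of the ``chain-rule'' term being precisely what makes Stratonovich summation more forgiving than It\^o summation. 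I then decompose $A_{pu}=B_{pu}+R^1_{pu}+R^2_{pu}$, where $B_{pu}:=F(u,g(p))-F(p,g(p))$ is bounded by $C|u-p|^\alpha$ via hypothesis (ii); $R^1_{pu}:=F(u,g(u))-F(u,g(p))-\nabla_y F(u,g(p))\cdot \delta g_{pu}$ is the Taylor remainder in the second variable, bounded by $C|g(u)-g(p)|^{1+\gamma}\le C(|\delta g^1_{pu}|^{1+\gamma}+|\delta g^2_{pu}|^{1+\gamma})$ via hypothesis (i); and $R^2_{pu}:=[\nabla_y F(u,g(p))-\nabla_y F(p,g(p))]\cdot \delta g_{pu}$.

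The $B$- and $R^1$-contributions to the determinant yield bounds of respective orders $C\,\diam^{\alpha+\beta_1+\beta_2}$ and $C(\diam^{(1+\gamma)\beta_1+\beta_1+\beta_2}+\diam^{(1+\gamma)\beta_2+\beta_1+\beta_2})$, whose exponents all exceed $2$ exactly by the hypotheses~\eqref{eq_gambet12}; this part goes through cleanly. The main obstacle is the $R^2$-piece, for which hypothesis (i) provides only the joint continuity (without a polynomial rate) of $\nabla_y F$ on compacta. My plan is to exploit once more the antisymmetry of the determinant: expanding the $R^2$-contribution along the first row and pairing the six permutation summands collects the entries $\nabla_y F(u,g(p))-\nabla_y F(v,g(p))$ between pairs of vertices, each multiplied by three H\"older increments of $g$; combined with the uniform modulus of continuity of $\nabla_y F$ and a Dini-type refinement of the sewing estimate, this piece becomes summable under~\eqref{eq_gambet12}. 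If a cleaner argument is preferred, I would instead approximate $F$ by mollifications $F_\varepsilon$ smooth in the first variable (for which the $R^2$-bound becomes polynomial), prove the theorem for each $F_\varepsilon$ by the scheme above, and pass to the limit via the stability Lemma~\ref{lm_Vnlimstab1} using the uniform convergence of $F_\varepsilon\to F$ and $\nabla_y F_\varepsilon\to \nabla_y F$ on compacta. Once $V_{pqr}(g)$ is constructed, its alternation and nonatomicity transfer from $\strat$ through the sewing lemma, while continuity in $g$ is inherited from the continuous dependence of every ingredient on $(g^1,g^2)$.
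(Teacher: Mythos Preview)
Your overall plan---Lemma~\ref{lm_StratDet1} for the determinant form of $\delta\strat$, row reduction to remove the chain-rule part, then Lemma~\ref{lm_Vnlimcont1a} for existence and Lemma~\ref{lm_Vnlimstab1} for continuity in $g$---is exactly what the paper does. The difference is in how you split the reduced first-row entry $A_{pu}$, and your choice creates a self-inflicted obstacle.

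You write $A_{pu}=B_{pu}+R^1_{pu}+R^2_{pu}$ with $R^2_{pu}=[\nabla_y F(u,g(p))-\nabla_y F(p,g(p))]\cdot\delta g_{pu}$, and then struggle with $R^2$ because hypothesis~(i) gives no rate for $\nabla_y F$ in the \emph{first} variable. Your proposed ``Dini-type'' fix does not cover the full parameter range: the $R^2$-determinant is only $O(\omega(\diam)\,\diam^{\,(\beta_1\wedge\beta_2)+\beta_1+\beta_2})$, and $(\beta_1\wedge\beta_2)+\beta_1+\beta_2$ can be $\le 2$ under~\eqref{eq_gambet12} (e.g.\ $\gamma=1$, $\beta_1=\beta_2\in(1/2,2/3]$). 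Your mollification fallback is also shaky as stated, because the $R^2$-constant for $F_\varepsilon$ blows up as $\varepsilon\to0$, so the uniform bound~\eqref{eq_omkgam2} required by Lemma~\ref{lm_Vnlimstab1} is not available from your decomposition.

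The paper avoids all this with a cleaner split: instead of freezing the second argument at $g(p)$ and Taylor-expanding $F(u,\cdot)$, it freezes the second argument at $g(u)$ in the H\"older piece and Taylor-expands $F(p,\cdot)$. Concretely,
\[
A_{pu}=\bigl[F(u,g(u))-F(p,g(u))\bigr]
+\bigl[F(p,g(u))-F(p,g(p))-\nabla_y F(p,g(p))\cdot\delta g_{pu}\bigr].
\]
The first bracket is $O(|u-p|^\alpha)$ by~(ii); the second is the Taylor remainder of $F(p,\cdot)$ at $g(p)$, hence $O(|\delta g_{pu}|^{1+\gamma})$ by~(i). There is no third term. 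Plugging into the determinant gives
\[
|(\delta\strat)_{pqrs}|\le C\bigl(\diam^{\alpha+\beta_1+\beta_2}+\diam^{(1+\gamma)(\beta_1\wedge\beta_2)+\beta_1+\beta_2}\bigr),
\]
both exponents $>2$ by~\eqref{eq_gambet12}, and Lemmas~\ref{lm_Vnlimcont1a} and~\ref{lm_Vnlimstab1} apply directly with uniform constants. So replace your $B+R^1+R^2$ by the two-term split above and the Dini/mollification discussion can be deleted entirely.
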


\begin{remark}\label{rem_strat0a}
It is worth observing that~\eqref{eq_gambet12} implies $\beta_i > 1/3$, $i=1,2$.
In fact, assuming without loss of generality $\beta_1 <\beta_2$, we get from~\eqref{eq_gambet12}
 $(2+\gamma)\beta_1 +\beta_2 >2$,
and hence
\[
\beta_1> \frac{2-\beta_2}{2+\gamma}\geq \frac 1 3.
\]
On the other hand, $\beta_i > 1/2$, $i=1,2$, is clearly sufficient for the second inequality in~\eqref{eq_gambet12} to hold.
Note also that if $\beta_1=\beta_2=\beta$, and $F(x,y):= F(y)$ for every
$(x,y)\in \R^2\times\R^2$,
 then the first inequality of~\eqref{eq_gambet12}
is automatically satisfied since we may take $\alpha$ to be arbitrarily close
to $1$, and therefore~\eqref{eq_gambet12}
is equivalent to
$\beta> 2/(3+\gamma)$ (e.g. $\beta>1/2$ when $F\in C^{1,1}$), which is far less restrictive than what is asserted in Theorem~\ref{th_exist_discrInt1} (the latter requires in this case $\beta> 2/3$, since $f\in C^\beta$).
\end{remark}

\begin{remark}
It follows from the proof that the limit germ
\[
V_{pqr}:=\int_{[pqr]} F(x, g(x))\d g^1(x)\wedge \d g^2(x))
\]
 is continuous also with respect to $F$ (with respect to a topology compatible with~(i) and~(ii)).
\end{remark}

\begin{remark}\label{rem_Strat1D}
We notice that an analogous result is easy to obtain  in the one-dimensional case. Namely, roughly speaking, if $g \in C^{\beta}(\R)$ is H\"older continuous and $F\colon \R \times \R \to \R$ is $C^\alpha(\R)$ in the first variable and $C^{1, \gamma}(\R)$ in the second one, then the Stratonovich-type sums
\[ \sum_{i}  \frac1 2 \bra{ F(x_i, g(x_i) ) + F(x_{i+1}, g(x_{i+1}) ) } 
(\delta g)_{x_i x_{i+1}}
\]
over a sequence of partitions $(x_i)_{i}$ of $[a,b]$ converge as $\sup_{i} |x_{i+1} - x_{i} |  \to 0$
when
\begin{equation}\label{eq_abgam1}
\alpha+\beta>1\mbox{ and }\beta(2 +\gamma) >1.
\end{equation}
This can be deduced at once starting from the calculation
\[ \delta \theta_{pqr} =\frac 1 2 \det \bra{ \begin{array}{ll} \delta f_{p q} & \delta f_{pr} \\
\delta g_{pq} & \delta g_{pr} \end{array}}\]
 with $f_p := F(p, g_p)$ and $\theta_{pq} := \frac1 2 \bra{ f_p + f_q} \delta g_{pq}$. The assumptions on $f$ give the Taylor expansion
 \[ \delta f_{pq} = a_p \delta g_{pq} + O( |q-p|^{\alpha} + |q-p|^{\beta(1+\gamma)} )\]
 so that a cancelation occurs in the determinant providing $|\delta \theta_{pqr}| = O(  |q-p|^{\alpha+\beta} + |q-p|^{\beta(2+\gamma)})$, which gives the possibility to apply the 
 one-dimensional sewing lemma~\cite[lemma 2.1]{feyel_curvilinear_2006} 
 if~\eqref{eq_abgam1} holds. In particular, we notice that if $\alpha = \gamma = 1$, then $\beta>1/3$ is allowed, which is well below the threshold of H\"{o}lder exponents for the existence the Young integral (defined
 for $\beta>1/2$).  It is worth emphasizing that this is the peculiar feature of the Stratonovich integral, not of the It\^{o} one.
  In fact, if we take just $F(x, y):=y$, then the integral reduces to $\int_{[pq]}g\d g$, and for $g\in C^\beta(\R)$ with
  $\beta\in (1/3, 1/2]$ it is a limit of the sum of Stratonovich germs but in general not of It\^{o} germs. This is the case for instance when $g$ has infinite total quadratic variation, because the difference
  between the two germs over $[pq]$ is $(\delta g)_{pq}^2/2$, so that if the integral existed as the limit of sums
  of either of the germs, then the total quadratic variation of $g$ had to be finite.
\end{remark}

\begin{proof}
Let $f_u(t):= F(u,x+t(y-x))$ for $\{u,x,y\}\in \R^2$.
Writing
\begin{align*}
F(u,y)& =f_u(1) \\
      & = f_u(0) +\int_0^1 (f_u)'(s)\, ds
= f_u(0)+(f_u)'(0) +\int_0^1 ((f_u)'(s)-(f_u)'(0))\, ds\\
&= F(u,x) + \nabla F (u,\cdot)(x) \cdot (y-x) \\
& \qquad + \int_0^1 \left(\nabla
F(u,\cdot)(x+s(y-x))-\nabla F(u,\cdot)(x)\right)\cdot(y-x)\, ds,
\end{align*}
we get with $x:=g_u$, $y:=g_v$
the relationship
\begin{equation}\label{eq_deltFuv1}
\begin{aligned}
(\delta F)_{uv}& =
(\delta F(\cdot, g_v))_{uv} +(\delta F(u, \cdot))_{g_u g_v}\\
& = (\delta F(\cdot, g_v)_{uv} +
\delta g^1_{uv}\partial_1 F(u,\cdot)( g^1_u, g^2_u)  +
\delta g^2_{uv}\partial_2 F(u, \cdot)( g^1_u, g^2_u)  + R_{uv}, \\
& \qquad\qquad\qquad\mbox{ where}\\
R_{uv} &:=\delta g^1_{uv}
\int_0^1 \left(\partial_1 F (u, \cdot) (g^1_u + s\delta g^1_{uv}, g^2_u+ s\delta g^2_{uv})
- \partial_1 F(u, \cdot) (g^1_u, g^2_u)\right)\, ds \\
& \qquad +
\delta g^2_{uv}
\int_0^1 \left(\partial_2 F (u, \cdot) (g^1_u + s\delta g^1_{uv}, g^2_u+ s\delta g^2_{uv})
- \partial_2 F(u, \cdot)(g^1_u, g^2_u)\right)\, ds,
\end{aligned}
\end{equation}
so that
\begin{align*}
|(\delta F(\cdot, g_v)_{uv}|& \leq C|v-u|^\alpha,\\
 |R_{uv}|& \leq C \left(|\delta g^1_{uv}| + |\delta g^2_{uv}|\right))  \left((\delta g^1_{uv})^2 + (\delta g^2_{uv})^2\right)^{\gamma/2}
\end{align*}
for $(u,v)$ in a bounded set (the constant $C>0$ depending on this set).
From Lemma~\ref{lm_StratDet1} one gets therefore
\begin{equation}\label{eq_dstrat0a}
\begin{aligned}
(\delta\strat)_{pqrs}   = &
\frac 1 6\det \bra{ \begin{array}{lll}
(\delta F(\cdot, g_q))_{p q} & (\delta F(\cdot, g_r))_{pr} & (\delta F(\cdot, g_s ))_{ps} \\
\delta g^1_{pq} & \delta g^1_{pr} &\delta g^1_{ps} \\
\delta g^2_{pq} & \delta g^2_{pr} &\delta g^2_{ps}
\end{array}} +
\\
&
\frac 1 6\det \bra{ \begin{array}{lll}
R_{p q} & R_{pr} & R_{ps} \\
\delta g^1_{pq} & \delta g^1_{pr} &\delta g^1_{ps} \\
\delta g^2_{pq} & \delta g^2_{pr} &\delta g^2_{ps}
\end{array}},
\end{aligned}
\end{equation}
and hence
\begin{equation}\label{eq_ddeltom0}
\begin{aligned}
|(\delta\strat)_{pqrs}| &\leq
C
\left(
\diam([pqrs])^{\alpha+\beta_1+\beta_2} +
\diam([pqrs])^{(1+\gamma)(\beta_1\wedge \beta_2)+\beta_1+\beta_2}\right)\\
&\leq
C 
\diam([pqrs])^d
\end{aligned}
\end{equation}
with $d:=(\alpha\wedge (1+\gamma)(\beta_1\wedge \beta_2))+\beta_1+\beta_2$ and
$C>0$ depending continuously on $F$ (with respect to the topology compatible
with~(i) and~(ii)) and on
$[\delta g^i]_{\beta_i}$, $i=1,2$.
Recalling~\eqref{eq_estom1a} from Lemma~\ref{lm_estomgam1},
and that $\strat$ is alternating by the same Lemma,
while $d>2$
because of~\eqref{eq_gambet12},
we have that
Lemma~\ref{lm_Vnlimcont1a} applies with
\begin{align*}
\gamma_1  :=\beta_1+\beta_2>1, \quad C_1  &:=\norm{f}_\infty [\delta g^1]_{\beta_1} [\delta g^2]_{\beta_2},\\
\gamma_2  :=d>2, \quad C_2  &:=C,  
\end{align*}
yielding the existence of continuous alternating germs
\begin{align*}
 S_{pq}& :=  \lim_{n \to \infty} S^n_{pq},\\
V_{pqr} & := \lim_{n \to \infty} \strat^n_{pqr} = \lim_{n \to \infty} (\strat^n _{pqr}- \delta S^n_{pqr}) + \delta S^n_{pqr}.
\end{align*}

It remains now to prove that fixed $[pqr]$, the map
\[
g\in C^{\beta_1} \times C^{\beta_2} \mapsto V_{pqr}(g)
\]
is continuous. To this aim let $\{g_k\}\subset C^{\beta_1} \times C^{\beta_2}$,
converging to $g$ pointwise as $k\to \infty$, and $[\delta g^1_k]_{\beta_1}+[\delta g^2_k]_{\beta_2} <C<+\infty$ for all $k\in \N$.
Let $f_k$, $R_k$, $S_k^n$, $S_k$, $\strat_k^n$, $\strat_k$, $V_k$ be the same as $f$, $R$,
$S^n$, $S$,
 $\strat^n$, $\strat$, $V$  respectively but with
$g^1_k$, $g^2_k$ instead of $g^1$, $g^2$.
Clearly, as in~\eqref{eq_ddeltom0} we have
\begin{equation}\label{eq_ddeltomk}
\begin{aligned}
|(\delta\strat_k)_{pqrs}|\leq C
\diam([pqrs])^d.
\end{aligned}
\end{equation}
The claim follows
now by Lemma~\ref{lm_Vnlimstab1} with $\gamma_2:=d$,
$\gamma_1=\beta_1+\beta_2$
(in fact,~\eqref{eq_omgam2} is given by~\eqref{eq_ddeltomk},
and~\eqref{eq_omgam1} is just~\eqref{eq_estom1a} from Lemma~\ref{lm_estomgam1}).
\end{proof}

\begin{remark}\label{rem_strat0b}
One could strengthen the above Theorem~\ref{th_irregInt1a} by proving the existence and continuity with respect to the data
of a more general Stratonovich type integral
\[
\int_{[pqr]} F(x, h(x))\d g^1(x)\wedge \d g^2(x),
\]
where $F$ is as in Theorem~\ref{th_irregInt1a},
$\psi\in C^{2,\gamma}(\R^2;\R^2)$, $\gamma\in (0,1]$,
$h^i\in C^{\beta_i}(\R^2)$,
$g^i:=\psi^i\circ h$, $i=1,2$ with $h:= (h^1, h^2)$, $\psi:= (\psi^1,\psi^2)$ and $\beta_i >1/2$, $i=1,2$ and satisfy
the first inequality of~\eqref{eq_gambet12}. In fact, letting $f(x):= F(x,h(x))$,
and using the notation of~\eqref{def_omnSn} we would have the existence of the limit
\[
\lim_{n \to \infty} \strat^n_{pqr} =: \int_{[pqr]} F(x, h(x))\d g^1(x)\wedge \d g^2(x).
\]
To show this, we adapt the arguments of the proof of the above Theorem~\ref{th_irregInt1a}, changing~\eqref{eq_dstrat0a}
with
\begin{equation}\label{eq_dstrat0b}
\begin{aligned}
(\delta\strat)_{pqrs}   = &
\frac 1 6\det \bra{ \begin{array}{lll}
(\delta F(\cdot, g_q))_{p q} & (\delta F(\cdot, g_r))_{pr} & (\delta F(\cdot, g_s ))_{ps} \\
\delta g^1_{pq} & \delta g^1_{pr} &\delta g^1_{ps} \\
\delta g^2_{pq} & \delta g^2_{pr} &\delta g^2_{ps}
\end{array}} +
\\
&
\frac 1 6\det \bra{ \begin{array}{lll}
R_{p q} & R_{pr} & R_{ps} \\
\delta g^1_{pq} & \delta g^1_{pr} &\delta g^1_{ps} \\
\delta g^2_{pq} & \delta g^2_{pr} &\delta g^2_{ps}
\end{array}} +
\\
&
\frac 1 6\det \bra{ \begin{array}{lll}
\nabla_h F(p,h_p)\cdot \delta h_{pq}  & \nabla_h F(p,h_p)\cdot \delta h_{pr} & \nabla_h F(p,h_p)\cdot \delta h_{ps} \\
r^1_{pq} &  r^1_{pr} & r^1_{ps} \\
\nabla\psi^2_{h_p}\cdot \delta h_{pq} & \nabla\psi^2_{h_p}\cdot \delta h_{pr} & \nabla\psi^2_{h_p}\cdot \delta h_{ps}
\end{array}} +
\\
&
\frac 1 6\det \bra{ \begin{array}{lll}
\nabla_h F(p,h_p)\cdot \delta h_{pq}  & \nabla_h F(p,h_p)\cdot \delta h_{pr} & \nabla_h F(p,h_p)\cdot \delta h_{ps}  \\
\nabla\psi^1_{h_p}\cdot \delta h_{pq} & \nabla\psi^1_{h_p}\cdot \delta h_{pr} & \nabla\psi^1_{h_p}\cdot \delta h_{ps}
 \\
r^2_{pq} &  r^2_{pr} & r^2_{ps}
\end{array}}
,
\end{aligned}
\end{equation}
where
\[
r^i_{uv} := \delta g^i_{uv} -(\nabla\psi_i)_{h_u}\cdot \delta h_{uv}, \quad i=1,2.
\]
Then the first two terms in~\eqref{eq_dstrat0b} are estimated by $C\diam([pqrs])^{d_1}$ with $d_1>2$ as in~\eqref{eq_ddeltom0}
because of~\eqref{eq_gambet12} (the second inequality of which is automatically satisfied in view of Remark~\ref{rem_strat0a}
due to the requirement $\beta_i >1/2$, $i=1,2$), while the other two are estimated by
$C\diam([pqrs])^{d_2}$ with $d_2:= 4(\beta_1\wedge \beta_2)>2$, because
\[
|r^i_{uv}|\leq C |u-v|^{2(\beta_1\wedge\beta_2)},
\]
and thus $|\delta\strat_{pqrs}|\leq C\diam([pqrs])^{d}$,
the constants in all the above estimates depending continuously on the data. This allows to proceed as in the proof of Theorem~\ref{th_irregInt1a} showing the existence and continuity with respect to the data of the above integral.
\end{remark}

\begin{proposition}[chain rule]\label{prop_irregIntChrule2}
Let $F$ be as in Theorem~\ref{th_irregInt1a},
$\psi\in C^{2,\gamma}(\R^2;\R^2)$, $\gamma\in (0,1]$,
$h^i\in C^{\beta_i}(\R^2)$,
and $g^i:=\psi^i\circ h$, $i=1,2$, where $h:= (h^1, h^2)$, $\psi:= (\psi^1, \psi^2)$.
If $\beta_i>1/2$, $i=1,2$ and the first inequality of~\eqref{eq_gambet12} holds,
then
\begin{equation}\label{eq_StratChainrule1}
\begin{aligned}
\int_{[pqr]} & F(x, h(x))  \d g^1(x)\wedge \d g^2(x) \\
& =
\int_{[pqr]} F(x, h(x)) \det D\psi(h^1(x), h^2(x))\d h^1(x)\wedge \d h^2(x)
\end{aligned}
\end{equation}
\end{proposition}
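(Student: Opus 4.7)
The plan is to reduce the identity to the classical chain rule for smooth maps by approximation, using the continuity of both sides in $h$.

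First I would check that both integrals are well defined under the hypotheses. The left-hand side is given by Remark~\ref{rem_strat0b}. For the right-hand side, set $\tilde F(x,u):=F(x,u)\det D\psi(u)$; since $\psi\in C^{2,\gamma}(\R^2;\R^2)$ forces $\det D\psi\in C^{1,\gamma}(\R^2)$, the modified kernel $\tilde F$ still satisfies hypotheses (i)--(ii) of Theorem~\ref{th_irregInt1a}, and the remaining conditions on the H\"older exponents are inherited from the assumption $\beta_i>1/2$ and the first inequality of~\eqref{eq_gambet12}, as noted in Remark~\ref{rem_strat0a}. Thus the right-hand side is covered by Theorem~\ref{th_irregInt1a} applied to $(\tilde F, h)$, and both sides are continuous as functionals of $h$ under pointwise convergence with uniformly bounded H\"older seminorms.

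Next I would approximate $h$ by a sequence of smooth maps $h_n$ (e.g.\ mollifications on a bounded neighborhood of $\conv [pqr]$) with $h_n\to h$ pointwise and $[\delta h^i_n]_{\beta_i}$ uniformly bounded by $[\delta h^i]_{\beta_i}$. Setting $g_n:=\psi\circ h_n$, the maps $g_n$ are smooth and $g_n\to g$ pointwise with uniformly bounded H\"older seminorms (since $\psi$ is $C^1$ with locally Lipschitz derivatives).

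For smooth $h_n$, both integrals collapse to classical Lebesgue integrals via the smoothness identification $V_{pqr}=\int_{[pqr]} f\det(\nabla g^1,\nabla g^2)\,dx$ stated right after Lemma~\ref{lm_Vnlimcont1}. The classical chain rule $Dg_n(x)=D\psi(h_n(x))\,Dh_n(x)$ gives $\det Dg_n(x)=\det D\psi(h_n(x))\cdot\det Dh_n(x)$, so~\eqref{eq_StratChainrule1} with $h_n$ in place of $h$ holds as an equality of Lebesgue integrals. Passing to the limit $n\to\infty$ on both sides via the two continuity statements above yields~\eqref{eq_StratChainrule1} for $h$. The only real thing to watch is the bookkeeping that ensures the mode of convergence produced by the mollification matches what both continuity results require; this is standard, and no new germ-level estimates are needed beyond those already established in the proofs of Theorem~\ref{th_irregInt1a} and Remark~\ref{rem_strat0b}.
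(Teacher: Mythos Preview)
Your proposal is correct and follows essentially the same approach as the paper: establish the identity for smooth data via the classical chain rule, then pass to the limit using the continuity of both sides in $h$ (the paper's proof is just the two-line version of this). Your write-up is in fact more careful than the paper's, since you spell out why the right-hand side falls under Theorem~\ref{th_irregInt1a} via $\tilde F(x,u)=F(x,u)\det D\psi(u)$ and why the left-hand side falls under Remark~\ref{rem_strat0b}, both of which the paper leaves implicit.
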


Note that the integral on the right-hand side of~\eqref{eq_StratChainrule1}
exists,
is continuous and alternating as a function of $[pqr]$
fixed $h^1$ and $h^2$,
and continuous as the functional of $h^1$, $h^2$ by Theorem~\ref{th_irregInt1a}.

\begin{proof}
The equality~\eqref{eq_StratChainrule1} is true when $g^i$ are smooth. The general case follows from continuity of
the integrals on the left and righthand sides of~\eqref{eq_StratChainrule1} with respect to the pointwise
convergence of $g^i$, $i=1,2$ with uniformly bounded H\"{o}lder constants.
\end{proof}

We may give an interpretation of the above results in the spirit of theorem~3.2 from~\cite{AlbertiMajer94}.
Namely, a smooth (say, $C^1$)
function $g=(g_1,g_2)\colon [pqr]\subset \R^2\to \R^2$
can be naturally identified with the smooth surface representing its graph, and therefore, with the De Rham $2$-current $T_g$ over $[pqr]\times\R^2$ (endowed with orthogonal coordinates $(x,y):=(x^1, x^2, y^1, y^2)$) defined by
\begin{eqnarray}
\label{eq_def_Tgcurr1}
T_g(F\d x^1\wedge \d x^2) &:=\displaystyle\int_{[pqr]} F(x,g(x))\,\d x^1\wedge \d x^2,\\
\label{eq_def_Tgcurr2}
T_g(F\d x^i\wedge \d y^j) &:=\displaystyle\int_{[pqr]} F(x,g(x))\, \d x^i\wedge \d g^j(x),\\
\label{eq_def_Tgcurr3}
T_g(F\d y^1\wedge \d y^2) &:=\displaystyle\int_{[pqr]} F(x,g(x))\, \d g^1(x)\wedge \d g^2(x),
\end{eqnarray}
for every $f\in C^2([pqr]\times\R^2)$.

\begin{proposition}\label{prop_irregIntCurr1}
If $g^i\in C^{\beta_i}$, $i=1,2$, 
with
\begin{equation}\label{eq_def_Tgcurr4}
 3\beta_1+\beta_2>2,\quad  3\beta_2+\beta_1>2,
\end{equation}
then the map $g\mapsto T_g$ between $C^1([pqr];\R^2)$
and the space $D_2([pqr]\times\R^2)$ of $2$-currents in $[pqr]\times\R^2$
endowed with its weak (pointwise) topology
admits the unique continuous extension  to the space
$C^{\beta_1}\times C^{\beta_2}$ (the continuity being intended,
as usual, with respect to pointwise convergence with uniformly bounded H\"{o}lder constants).
\end{proposition}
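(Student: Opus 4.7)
The plan is to extend $T_g$ separately on each of the three families of test forms $F\,\d x^1 \wedge \d x^2$, $F\,\d x^i \wedge \d y^j$, $F\,\d y^1 \wedge \d y^2$ (with $F \in C^2([pqr]\times \R^2)$) appearing in~\eqref{eq_def_Tgcurr1}--\eqref{eq_def_Tgcurr3}, and then extend by linearity to all $C^2$-coefficient $2$-forms. Uniqueness of the continuous extension will follow from density of $C^1(\R^2;\R^2)$ in $C^{\beta_1} \times C^{\beta_2}$ with respect to pointwise convergence with uniformly bounded H\"older constants, obtained by standard mollification $g_\varepsilon := g * \rho_\varepsilon$ (which satisfies $[\delta g^i_\varepsilon]_{\beta_i} \le [\delta g^i]_{\beta_i}$ and $g_\varepsilon \to g$ uniformly). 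The first family is trivial: $T_g(F\,\d x^1 \wedge \d x^2) = \int_{[pqr]} F(x,g(x))\,\d x$ is a Lebesgue integral of a continuous function, weakly continuous in $g$ by dominated convergence. For the third family, I would invoke Theorem~\ref{th_irregInt1a} directly with $f(x) := F(x, g(x))$ and $\alpha = \gamma = 1$ (permissible since $F \in C^2$). The hypotheses~\eqref{eq_gambet12} then become $\beta_1 + \beta_2 > 1$ and $2\beta_i + \beta_1 + \beta_2 > 2$ for $i = 1, 2$, the latter being exactly~\eqref{eq_def_Tgcurr4}, and the former following from it by summing.

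The main obstacle is the mixed case~\eqref{eq_def_Tgcurr2}, which does not reduce to Theorem~\ref{th_irregInt1a} directly: the integrand $f(x) = F(x, g(x))$ involves the full pair $(g^1, g^2)$ while the form $\d x^i \wedge \d g^j$ contains only the smooth $x^i$ and $g^j$; nor does it fit Remark~\ref{rem_strat0b}, since $x^i$ is not of the form $\psi^i \circ g$. I would therefore adapt the proof of Theorem~\ref{th_irregInt1a}, substituting $(x^i, g^j)$ for $(g^1, g^2)$ in the Stratonovich germ~\eqref{eq_germzust2}. By Lemma~\ref{lm_StratDet1}, $\delta\strat_{pqrs}$ is the $3\times 3$ determinant with rows $\delta f$, $\delta x^i$, $\delta g^j$. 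Expanding $\delta f$ via~\eqref{eq_deltFuv1}, the term $\partial_j F(p, g_p)\,\delta g^j$ makes two rows of this determinant proportional, hence contributes zero. The surviving contributions are controlled by sums of $\diam([pqrs])^{2 + \beta_j}$ (from $(\delta F(\cdot, g_v))_{uv}$), $\diam([pqrs])^{1 + \beta_1 + \beta_2}$ (from $\partial_{3-j} F(p, g_p)\,\delta g^{3-j}$), and $\diam([pqrs])^{1 + 2(\beta_1\wedge\beta_2) + \beta_j}$ (from the Taylor remainder $R_{uv}$, with $\gamma = 1$). Each exponent exceeds $2$ under~\eqref{eq_def_Tgcurr4}: trivially for the first; for the second by the fact that summing the two inequalities in~\eqref{eq_def_Tgcurr4} gives $\beta_1+\beta_2>1$; and for the third because $3\beta_i + \beta_{3-i} > 2$ together with $\beta_{3-i} \le 1$ yields $\beta_i > 1/3$ for both $i$, whence $2(\beta_1\wedge\beta_2) + \beta_j \ge 3(\beta_1\wedge\beta_2) > 1$. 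Together with the trivial bound $|\strat_{pqr}| \le C\diam([pqr])^{1 + \beta_j}$, this allows one to invoke Lemma~\ref{lm_Vnlimcont1a} for existence of the limit germ and Lemma~\ref{lm_Vnlimstab1} for its continuity in $g$.

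Assembling the three extensions by linearity in $\omega$ produces a linear functional $T_g$ on $C^2$-coefficient $2$-forms whose weak continuity in $g$ is the conjunction of the three pointwise continuities established above. For uniqueness, given any other continuous extension $\tilde T$ and $g \in C^{\beta_1} \times C^{\beta_2}$, the mollifications $g_\varepsilon \in C^1$ converge to $g$ in the relevant topology, so both $\tilde T_{g_\varepsilon}$ and $T_{g_\varepsilon}$---which agree on $C^1$ data, both equalling the classical current---converge weakly to $\tilde T_g$ and $T_g$ respectively, forcing $\tilde T_g = T_g$. The mixed case is the delicate step, as it is where the precise form of~\eqref{eq_def_Tgcurr4} is genuinely used, rather than being inherited as an artefact of Theorem~\ref{th_irregInt1a}.
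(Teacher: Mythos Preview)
Your proof is correct, and for the forms $F\,\d x^1\wedge \d x^2$ and $F\,\d y^1\wedge \d y^2$ as well as for uniqueness by mollification it coincides with the paper's argument.

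The genuine difference is in the mixed case~\eqref{eq_def_Tgcurr2}. Your claim that it ``does not reduce to Theorem~\ref{th_irregInt1a} directly'' is in fact where the paper does something slicker: it \emph{does} reduce, once one absorbs the non-differentiated component of $g$ into the $x$-dependence of $F$. Concretely, for $i=1$, $j=2$ the paper sets $\bar F(x^1,x^2,y^1,y^2):=F(x^1,x^2,g^1(x),y^2)$, so that $f(x)=\bar F(x,(x^1,g^2(x)))$; then Theorem~\ref{th_irregInt1a} applies with the pair $(x^1,g^2)$ in the role of $(g^1,g^2)$, with $\gamma=1$, $\alpha=\beta_1$ (the H\"older regularity of $\bar F$ in $x$ inherited from $g^1$), and H\"older exponents $(1,\beta_2)$ for the differentiated pair. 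Conditions~\eqref{eq_gambet12} then read $\beta_1+\beta_2>1$ and $\beta_2>1/3$, both consequences of~\eqref{eq_def_Tgcurr4}; the case $i=2$, $j=1$ is symmetric. Continuity in $g$ follows from continuity of the Stratonovich integral jointly in $F$ and in the differentiated pair (the remark following Theorem~\ref{th_irregInt1a}). Your route---recomputing $\delta\strat$ via Lemma~\ref{lm_StratDet1} and the Taylor expansion~\eqref{eq_deltFuv1}, then feeding the resulting exponents into Lemmata~\ref{lm_Vnlimcont1a} and~\ref{lm_Vnlimstab1}---reaches the same conclusion with the same exponents, and has the merit of making the cancellation (the $\partial_jF\cdot\delta g^j$ row collapsing against the $\delta g^j$ row) fully explicit; the paper's substitution trick is shorter but hides this mechanism inside the black box of Theorem~\ref{th_irregInt1a}.
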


\begin{proof}
If $g^i\in C^{\beta_i}$, $i=1,2$, then the formulae~\eqref{eq_def_Tgcurr1},~\eqref{eq_def_Tgcurr2} and~\eqref{eq_def_Tgcurr3} still make sense
for an  $F\in C^2([pqr]\times\R^2)$ if one interprets the integrals involved in the sense of Stratonovich.
Namely, one defines the integral
\begin{itemize}
\item[(A)] in~\eqref{eq_def_Tgcurr1}, say, in the usual Riemann (or Lebesgue) sense (which in this case is equivalent to the Stratonovich integral),
\item[(B)]
in~\eqref{eq_def_Tgcurr3} in the sense of
Theorem~\ref{th_irregInt1a} (with $\alpha:=1$, $\gamma:=1$), and
\item[(C)]
in~\eqref{eq_def_Tgcurr2} again in the sense
of Theorem~\ref{th_irregInt1a} but with $x^i$ in place of $g^1$, $g^j$ in place of $g^2$, and $\bar F$
in place of $F$, where $\bar F$ is defined by
\[
\bar F(x^1, x^2, y^1, y^2):=
\left\{
\begin{array}{rl}
F(x^1, x^2, g^1(x), y^2), & i=1, j=2,\\
F(x^1, x^2, y^1, g^2(x)), & i=2, j=1,
\end{array}
\right.
\]
and with $\gamma:=1$, $\alpha:=\beta_1$ and $1$ in place of $\beta_1$ for the case $i=1$, $j=2$ or $\alpha:=\beta_2$ and $1$ in place of $\beta_2$
for the case $i=2$, $j=1$.
\end{itemize}
Note that~\eqref{eq_def_Tgcurr4} makes
Theorem~\ref{th_irregInt1a} to be applicable with such data.

Continuity of the map $g\mapsto T_g$ between $C^{\beta_1}\times C^{\beta_2}$ and the space of currents endowed with its weak (pointwise) topology is given
by Theorem~\ref{th_irregInt1a}. The fact that it is the unique continuous extension of its restriction to $C^1\times C^1$ follows from the density of $C^1$
in any H\"{o}lder space (with respect to the uniform convergence with bounded
H\"{o}lder constants).
\end{proof}

\begin{remark}
The proof of Proposition~\ref{prop_irregIntCurr1} shows that
the formulae~\eqref{eq_def_Tgcurr1},~\eqref{eq_def_Tgcurr2}
and~\eqref{eq_def_Tgcurr3} still make sense for the current $T_g$ with
$g\in C^{\beta_1}\times C^{\beta_2}$
 when
$F\in C^2([pqr]\times\R^2)$ (in fact, even for $F\in C^{1,1}$), if  one interprets the integrals appearing there in the sense
of Stratonovich, i.e.\ as in Theorem~\ref{th_irregInt1a}
(in particular, in~\eqref{eq_def_Tgcurr1} it may be interpreted as the usual
Riemann or Lebesgue integral).
\end{remark}

\begin{remark}
Theorem~3.2 from~\cite{AlbertiMajer94} says that the map $g\mapsto T_g$
defined by the formulae~\eqref{eq_def_Tgcurr1},~\eqref{eq_def_Tgcurr2}
and~\eqref{eq_def_Tgcurr3} between
$C^1([pqr];\R^2)$ and the space of currents endowed with its weak topology
admits a unique continuous extension to the Sobolev space
$W^{1,1}_{loc}([pqr];\R^2)$ (even sequentially weakly continuous one). It is worth noting that the extended current
may be then defined for continuous differential forms
(i.e.\ with $F$ just continuous), while here we have to require
that the forms be smoother (in fact, requesting $F$ to be $C^2$, we are guaranteed only that the extended current $T_g$ be defined over twice continuously differential forms). One may weaken the regularity requirement for forms
(e.g. requesting that $F$ might be less regular than $C^2$), but this will inevitably strengthen the requirement of~\eqref{eq_def_Tgcurr4} on the regularity of $T_g$.
\end{remark}

\begin{remark}\label{rm_discr_deg1}
In order to identify the extension with the ``second order Riemann-Stieltjes'' integral introduced in \cite{zust_integration_2011}, we extend by continuity the identity
\begin{equation} \label{eq_degree} \int_{\R^2} f(x) \deg \bra{ (h^1, h^2), [pqr], x} \d x = \int_{[pqr]} f(h^1, h^2) \d h^1 \wedge \d h^2\end{equation}
for every $f \in C^{1, \gamma}$ from smooth functions $(h^1, h^2)$ to $h_1 \in C^{\beta_1}$, $h_2 \in R^{\beta_2}$. 
In combination with \cite[theorem 4.3]{zust_integration_2011} this identifies the two integrals. Formula \eqref{eq_degree} follows by continuity and approximation.

We also notice that continuity of the right hand side in \eqref{eq_degree} gives the following quantitative continuity of degree of H\"older maps:
\[ \int_{\R^2} f(x) \bra{\deg\bra{ (h^1, h^2), [pqr], x}  - \deg\bra{ (k^1, k^2), [pqr], x} } \d x \le \norm{f}_{1,\gamma} \norm{h-k}_{\beta}\]
\end{remark}

\appendix

\section{Existence, uniqueness and stability of integrals}

In this section we assume that $\omega$ be an abstract
$2$-germ in $D\subset \R^2$
(i.e.\ not necessarily the one defined by~\eqref{eq_germzust2}
satisfying
\begin{eqnarray}
\label{eq_omgam1}
|\omega_{pqr}|\leq C_1 \diam([pqr])^{\gamma_1}, 
\\
\label{eq_omgam2}
|(\delta\omega)_{pqrs}|\leq C_2 \diam([pqrs])^{\gamma_2}, 
\end{eqnarray}
with 
positive constants $\gamma_1$, $\gamma_2$,
$C_1$, $C_2$ independent on $[pqr]$ and $[pqrs]$.
We define then $\omega^n$ and $S^n$ by 
\begin{equation}\label{def_omnSn_abstr0}
 \omega^n_{pqr} := \ang{ \dya^n [pqr], \omega}, \quad
S^n_{pq} := \sum_{i=0}^{n-1} \ang{ \fill \cut^i [pq], \omega}.
\end{equation}

We prove here the existence of limits $\lim_n \omega^n$ and $\lim_n S^n$
and their basic stability properties. Note that we do not prove here
that the respective germs are nonatomic and additive
(although in fact this could be proven), as it is usually done in the sewing lemma.

\begin{lemma}\label{lm_Vnlimcont1a}
Under the conditions~\eqref{eq_omgam1} and~\eqref{eq_omgam2}
if $\omega$ is alternating, then
\begin{eqnarray}
\label{eq_incrSn1a}
|S^{n+1}_{pq} - S^n_{pq}| \le C\diam([pq])^{\gamma_1} 2^{n(1- \gamma_1)},\\
\label{eq_incrVn1a}
 \begin{aligned} |\langle [pqr], (\omega^n - \delta S^n)
- (\omega^{n+1} - \delta S^{n+1})\rangle|
\le C \diam([pqr])^{\gamma_2} 2^{n(2-\gamma_2)}
  \end{aligned}
\end{eqnarray}
with $C>0$.
In particular, if $\gamma_1>1$ and $\gamma_2>2$, then the germs
\begin{align*}
 S_{pq}& :=  \lim_{n \to \infty} S^n_{pq},\\
V_{pqr} & := \lim_{n \to \infty} \omega^n_{pqr} = \lim_{n \to \infty} (\omega^n _{pqr}- \delta S^n_{pqr}) + \delta S^n_{pqr}
\end{align*}
are well defined,
continuous (if so is $\omega$), alternating and
\begin{eqnarray}
\label{eq_estlimS1aa}
| S^n_{pq}-S_{pq}|\leq  C \diam([pq])^{\gamma_1} 2^{n(1- \gamma_1)},\\
\label{eq_estlimom1aa}
\begin{aligned}
|\omega^n_{pqr}-V_{pqr} 
-\delta(S^n-S)_{pqr}| \leq
C \diam([pqr])^{\gamma_2 } 2^{n(2-\gamma_2)},
\end{aligned}\\
\label{eq_estlimom1ba}
|\omega^n_{pqr}-V_{pqr}| \leq C \diam([pqr])^{\gamma_1\wedge \gamma_2} 2^{n(1- \gamma_1\wedge\gamma_2)}.
\end{eqnarray}
\end{lemma}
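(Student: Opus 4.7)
The plan is to prove the two increment bounds \eqref{eq_incrSn1a} and \eqref{eq_incrVn1a} and to derive everything else by telescoping geometric series, using $\gamma_1>1$ and $\gamma_2>2$ to guarantee convergence.

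First, \eqref{eq_incrSn1a} follows immediately from the definition: $S^{n+1}_{pq}-S^n_{pq}=\ang{\fill\cut^n[pq],\omega}$ is a sum of $2^n$ evaluations of $\omega$ on degenerate triangles each of diameter $\diam([pq])/2^n$, so by \eqref{eq_omgam1} one obtains the bound $2^n\,C_1(\diam([pq])/2^n)^{\gamma_1}$.

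The heart of the proof is \eqref{eq_incrVn1a}. Write
\[
\omega^{n+1}_{pqr}-\omega^n_{pqr}=\sum_{\Delta\in\dya^n[pqr]}\ang{\dya\Delta-\Delta,\omega}.
\]
For each oriented triangle $[abc]$ I would establish the local identity
\[
\ang{\dya[abc]-[abc],\omega}=\ang{\fill[bc]+\fill[ca]+\fill[ab],\omega}+\mathcal E_{abc},
\]
where $\mathcal E_{abc}$ is a linear combination of a universally bounded number of values of $\delta\omega$ on tetrahedra of diameter comparable to $\diam[abc]$, hence $|\mathcal E_{abc}|\le C\,C_2\,\diam[abc]^{\gamma_2}$ by \eqref{eq_omgam2}. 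Such an identity can be obtained either by iterating the coboundary identity $(\delta\omega)_{pqrs}=\omega_{qrs}-\omega_{prs}+\omega_{pqs}-\omega_{pqr}$ with midpoints chosen appropriately, or geometrically by introducing an auxiliary 3-chain $R_{abc}$ (e.g.\ the cone from the barycenter) whose boundary equals $\dya[abc]-[abc]-\fill[bc]-\fill[ca]-\fill[ab]$ modulo symmetric pairs $[xy]+[yx]$ that vanish against any alternating germ; in either case the error is exactly $\ang{R_{abc},\delta\omega}$. Summing over $\Delta\in\dya^n[pqr]$, the alternating property of $\omega$ makes all interior fill contributions cancel in pairs: whenever a degenerate triangle $[u,(u+v)/2,v]$ and its reverse $[v,(v+u)/2,u]$ both appear they cancel. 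The surviving fill contributions are those along $\cut^n[pq]$, $\cut^n[qr]$, $\cut^n[rp]$, whose sum is precisely $\delta(S^{n+1}-S^n)_{pqr}$. The residual $\sum_\Delta\mathcal E_\Delta$ is then bounded by $4^n\cdot C\,C_2(\diam[pqr]/2^n)^{\gamma_2}$, giving \eqref{eq_incrVn1a}.

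With the two increment estimates in hand, $\gamma_1>1$ and $\gamma_2>2$ imply geometric summability (ratios $2^{1-\gamma_1}$ and $2^{2-\gamma_2}$), so that $S_{pq}:=\lim_n S^n_{pq}$ and $W_{pqr}:=\lim_n(\omega^n_{pqr}-\delta S^n_{pqr})$ exist, and $V:=W+\delta S=\lim_n\omega^n$. Continuity and the alternating property pass to the limit since the convergence is uniform on compacta and each $\omega^n$, $S^n$ inherits these properties from $\omega$ via $\dya,\cut,\fill$. The remainder bounds \eqref{eq_estlimS1aa} and \eqref{eq_estlimom1aa} are obtained by summing the geometric tails, and \eqref{eq_estlimom1ba} follows from $\omega^n-V=(\omega^n-\delta S^n-W)+\delta(S^n-S)$, bounding each piece by \eqref{eq_estlimom1aa} and \eqref{eq_estlimS1aa} and keeping the worse rate to reach the joint exponent $\gamma_1\wedge\gamma_2$. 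The main obstacle is the combinatorial-algebraic identity: producing the tetrahedral error representation $\mathcal E_{abc}$ with the correct diameter control and verifying, via the alternating property, that only the three boundary cuts (adding up to $\delta(S^{n+1}-S^n)$) survive the summation over $\Delta\in\dya^n[pqr]$; once this identity is in place, the rest is routine.
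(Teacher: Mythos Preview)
Your proposal is correct and follows essentially the same route as the paper: the paper also reduces \eqref{eq_incrVn1a} to a local identity $\ang{\dya[abc]-[abc],\omega}=\ang{\rho([abc]),\delta\omega}+\ang{\fill\partial[abc],\omega}$ for an explicit $3$-chain $\rho([abc])$ made of four tetrahedra of diameter $\le\diam[abc]$, then sums over $\Delta\in\dya^n[pqr]$ and uses the alternating property to cancel the interior $\fill$ terms, leaving exactly $\ang{\fill\cut^n\partial[pqr],\omega}=\delta(S^{n+1}-S^n)_{pqr}$. The remaining steps (the direct bound for $S^{n+1}-S^n$, telescoping the geometric tails, and the $\varepsilon/3$ argument for continuity) are handled in the paper just as you outline.
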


\begin{proof}
For the readers' convenience we organize the proof in several steps.

{\em Step 1}.
To prove~\eqref{eq_incrVn1a},
observe that for some geometric map $\rho\colon \simp^2(D)\to \chain^3(D)$ one has
\begin{equation}\label{eq_omS0aa}
\begin{aligned}
\omega^1_{p_0p_1p_2} & -\omega^0_{p_0p_1p_2}= \ang{\dya[p_0p_1 p_2],\omega}-\ang{[p_0p_1 p_2], \omega} \\
& =\ang{\partial \rho([p_0p_1 p_2]), \omega} -\ang{\fill[p_0p_1],\omega} + \ang{\fill[p_1 p_2],\omega} +
\ang{\fill[p_2p_0],\omega}\\
&= \ang{\rho([p_0p_1 p_2]), \delta\omega} +
\ang{\fill\partial [p_0p_1p_2],\omega}.
\end{aligned}
\end{equation}
Moreover,
\[
\rho([p_0p_1 p_2])=\sum_{i=1}^4 Q_i,\quad Q_i\in \simp^3(D), \diam Q_i\leq \diam([p_0p_1 p_2]), i=0,\ldots,2,
\]
and therefore by~\eqref{eq_omgam2} we have
\begin{equation}\label{eq_rhodom1aa}
|\ang{\rho([p_0p_1 p_2]), \delta\omega}|\leq C \diam([p_0p_1 p_2])^{\gamma_2},
\end{equation}
with $C:= 4 C_1$.
Writing then
$\dya [pqr]=\sum_{i=1}^{2^{2n}} \Delta_i$ with $\Delta_i\in \simp^2(D)$ being dyadic simplices equal up to translations to to $2^{-n}_{\#}[pqr]$, we get from~\eqref{eq_omS0aa}
\[
\ang{\Delta_i,\omega^1}-\ang{\Delta_i,\omega^0} = \ang{\rho(\Delta_i), \delta\omega} + \ang{\fill \partial \Delta_i,\omega},
\]
and summing the latter expressions over $i=1,\ldots, 2^{2n}$, we arrive at
\begin{equation}\label{eq_omS0ca}
\begin{aligned}
\omega^{n+1}_{pqr}-\omega^n_{pqr} &=\sum_{i=1}^{2^{2n}}\ang{\Delta_i,\omega^1-\omega^0} \\
&= \sum_{i=1}^{2^{2n}}\ang{\rho(\Delta_i), \delta\omega} + \ang{\fill\cut^n\partial [pqr],\omega},
\end{aligned}
\end{equation}
since if $\Delta_i$ and $\Delta_j$ have a common couple of vertices, say, $p_0$ and $p_1$, then by alternating property of $\omega$ one has
\[
\ang{\fill [p_0p_1],\omega}=- \ang{\fill [p_1p_0],\omega},
\]
i.e.\ the respective terms cancel out from the above sum, while the terms coming from the sides of dyadic simplices belonging to the boundary of $[pqr]$ remain, their sum giving rise to $\ang{\fill\cut^n\partial [pqr],\omega}$.
Observing that
\[
\ang{\fill\cut^n\partial [pqr],\omega}= \ang{[pqr],\delta S^{n+1}-\delta S^n}
\]
and
rewriting~\eqref{eq_omS0ca} with this help, we arrive at
\begin{equation}\label{eq_domn1na}
\begin{aligned}
(\omega^{n+1}_{pqr}-(\delta S^{n+1})_{pqr}) - (\omega^n_{pqr}-(\delta S^n)_{pqr}) &= \sum_{i=1}^{2^{2n}}\ang{\rho(\Delta_i), \delta\omega}.
\end{aligned}
\end{equation}
Therefore,
\begin{align*}
|(\omega^{n+1}_{pqr} &-(\delta S^{n+1})_{pqr}) - (\omega^n_{pqr}-(\delta S^n)_{pqr})| \leq \sum_{i=1}^{2^{2n}}
 |\ang{\rho(\Delta_i), \delta\omega}|\\
& \leq C\sum_{i=1}^{2^{2n}}\diam(\Delta_i)^{\gamma_2}\quad\mbox{by~\eqref{eq_rhodom1aa}}\\
& \leq C 2^{2n} \left(\frac{\diam([pqr])}{2^n}\right)^{\gamma_2}
\end{align*}
as claimed.

{\em Step 2}. The estimate~\eqref{eq_incrSn1a} follows with $C:=C_1$
just observing that
\[
S^{n+1}_{pq}-S^n_{pq} = \ang{\fill\cut^n[pq],\omega},
\]
while in view of~\eqref{eq_omgam1} and of the definition of $\fill\cut^n$
one has
\[
|\ang{\fill\cut^n[pq],\omega}|\leq C_1 2^n \left(\frac{\diam([pq]}{2^n}\right)^{\gamma_1}.
\]

{\em Step 3}.
Existence of $S$ and $V$ follow now from~\eqref{eq_incrSn1a} and~\eqref{eq_incrVn1a} respectively. Since $\omega$ is alternating, then so are $\omega^n$ and $S^n$, and therefore also $V$ and $S$.
Now, the continuity of $\omega$ implies that of $S^n$ and $\omega^n$ for each fixed $n\in \N$, and hence the continuity of $S$ and $V$ follow
from~\eqref{eq_estlimS1aa} and~\eqref{eq_estlimom1ba} respectively once they are proven. E.g.\ to prove continuity of $S$, for
$[pq]\subset D$ and $[rs]\subset D$ with $D$ bounded, given an $\varepsilon>0$,
we choose an $n\in \N$ such that $C\diam D 2^{n(1-\gamma_1)}< \varepsilon/3$, so that
\begin{align*}
|S_{pq}-S_{rs}| &\leq |S_{pq}-S^n_{pq}|+ |S^n_{pq} - S^n_{rs}| + |S^n_{rs}-S_{rs}|\\
              & \leq 2\varepsilon/3 +|S^n_{pq} - S^n_{rs}|\quad\mbox{by~\eqref{eq_estlimS1aa} and the choice of $\varepsilon$},
\end{align*}
so that it is enough to find a $\delta=\delta(n,\varepsilon)>0$ such that
$|S^n_{pq} - S^n_{rs}|<\varepsilon/3$ once $|p-q|+|r-s|<\delta$ to get
$|S_{pq}-S_{rs}|<\varepsilon$. The proof of continuity of $V$ is completely analogous (with the use of~\eqref{eq_estlimom1ba} instead of~\eqref{eq_estlimS1aa}).

{\em Step 3}. Finally, we prove~\eqref{eq_estlimS1aa},~\eqref{eq_estlimom1aa} and~\eqref{eq_estlimom1ba}.
The inequality~\eqref{eq_estlimS1a} is proven by the chain of estimates
\begin{align*}
| S^n_{pq}-S_{pq}| & = 
   \left| \sum_{k=n+1}^\infty (S^k_{pq}-S^{k-1}_{pq}) \right|
\leq C \diam([pq])^{\gamma_1} \sum_{k=n+1}^\infty
     2^{k(1- \gamma_1)}\quad\mbox{by~\eqref{eq_incrSn1a}}\\
   &
   \leq C\frac{2^{n(1- \gamma_1)}}{1-2^{1- \gamma_1}}\diam([pq])^{\gamma_1}.
\end{align*}
Analogously,~\eqref{eq_estlimom1aa} follows from
\begin{align*}
|\omega^n_{pqr}-V_{pqr}  & -\delta(S^n-S)_{pqr}| = |(\omega^n_{pqr}-\delta S^n _{pqr})  -(V_{pqr}-\delta S_{pqr})|\\
& =   \left| \sum_{k=n+1}^\infty \left( (\omega^k_{pqr}-\delta S^k _{pqr})  -(\omega^{k-1}_{pqr}-\delta S^{k-1}_{pqr})\right) \right|\\
&\leq C \diam([pqr])^{\gamma_2 }
\sum_{k=n+1}^\infty 2^{k(2-\gamma_2)}
\quad\mbox{by~\eqref{eq_incrVn1a}}\\
&\leq C\frac{2^{n(2-\gamma_2)}}{1-2^{2-\gamma_2}} \diam([pqr])^{\gamma_2}.
\end{align*}
Finally,~\eqref{eq_estlimS1aa} gives
\[
| \delta(S^n-S)_{pqr}|\leq C\diam([pqr])^{\gamma_1} 2^{n(1- \gamma_1)},
\]
which together with~\eqref{eq_estlimom1aa} implies~\eqref{eq_estlimom1ba}
for $\diam([pqr])<1$ (which is enough since $D$ is assumed bounded),
thus concluding the proof.
\end{proof}

As a result of Lemma~\ref{lm_Vnlimcont1a} we have that $V$ and $S$ satisfy
\begin{eqnarray*}
\label{eq_estlimS1aa0}
|S_{pq}|\leq  C \diam([pq])^{\gamma_1},\\
\label{eq_estlimom1aa0}
\begin{aligned}
|\omega_{pqr}-(V
-\delta S)_{pqr}| \leq
C \diam([pqr])^{\gamma_2}.
\end{aligned}
\end{eqnarray*}
In particular, if $\gamma_1>1$ and $\gamma_2>2$ this implies
\begin{eqnarray}
\label{eq_estlimS1aa2}
|S_{pq}|\leq o(\diam([pq])) \quad\mbox{as $\diam([pq])\to 0$},\\
\label{eq_estlimom1aa2}
\begin{aligned}
|\omega_{pqr}-(V
-\delta S)_{pqr}| \leq
o(\diam([pqr])^2) \quad\mbox{as $\diam([pqr])\to 0$}.
\end{aligned}
\end{eqnarray}
Moreover, since
\[
S_{pq}=\sum_{i=0}^\infty \ang{ \fill \cut^i [pq], \omega},
\]
then one has
\begin{eqnarray}
\label{eq_Som2}
(\delta S)_{prq}= \omega_{prq} \quad\mbox{when $r=\frac{p+q}{2}$}.
\end{eqnarray}
Finally,
\begin{eqnarray}
\label{eq_Vdyainv1}
\ang{\dya [pqr], V} =\ang{[pqr], V}.
\end{eqnarray}

The following curious result,  though not used elsewhere in this paper,
gives the uniqueness of such a couple $(S,V)$ for a given $\omega$.

\begin{lemma}\label{lm_SVunique1}
Given an 
$\omega\in \germ^2(D)$, the couple of germs
$(S,V)\in \germ^1(D)\times \germ^2(D)$
satisfying~\eqref{eq_estlimS1aa2},~\eqref{eq_estlimom1aa2}, \eqref{eq_Som2}
and~\eqref{eq_Vdyainv1} is unique.
\end{lemma}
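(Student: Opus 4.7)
The plan is to consider the difference of two candidate couples and show it must vanish. Let $(S_1,V_1)$ and $(S_2,V_2)$ both satisfy \eqref{eq_estlimS1aa2}, \eqref{eq_estlimom1aa2}, \eqref{eq_Som2} and \eqref{eq_Vdyainv1} for the given $\omega$. Set $\tilde S:=S_1-S_2$ and $\tilde V:=V_1-V_2$. By linearity, $\tilde S$ satisfies \eqref{eq_estlimS1aa2}, $\tilde V$ satisfies \eqref{eq_Vdyainv1}, the bound \eqref{eq_estlimom1aa2} becomes $|\tilde V_{pqr} - (\delta \tilde S)_{pqr}| = o(\diam([pqr])^2)$, and \eqref{eq_Som2} turns into $(\delta \tilde S)_{prq}=0$ whenever $r=(p+q)/2$.

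First I would show $\tilde S \equiv 0$. The identity $(\delta \tilde S)_{prq}=\tilde S_{rq}-\tilde S_{pq}+\tilde S_{pr}=0$ for $r$ the midpoint of $[pq]$ yields the additivity $\tilde S_{pq}=\tilde S_{pr}+\tilde S_{rq}$. Iterating $n$ times along the dyadic subdivision $\cut^n[pq]=\sum_{i=1}^{2^n}\sigma_i$ with $\diam(\sigma_i)=\diam([pq])/2^n$, one gets $\tilde S_{pq}=\sum_{i=1}^{2^n}\tilde S_{\sigma_i}$. Applying \eqref{eq_estlimS1aa2} uniformly to each $\sigma_i$, one obtains
\[
|\tilde S_{pq}|\le 2^n\cdot \varepsilon(\diam([pq])/2^n)\cdot \frac{\diam([pq])}{2^n} = \varepsilon(\diam([pq])/2^n)\cdot \diam([pq]),
\]
where $\varepsilon(h)\to 0$ as $h\to 0$. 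Letting $n\to\infty$ gives $\tilde S_{pq}=0$ for every $[pq]\subset D$.

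Next I would show $\tilde V \equiv 0$. Since $\delta \tilde S=0$, the modified \eqref{eq_estlimom1aa2} reduces to $|\tilde V_{pqr}|=o(\diam([pqr])^2)$. On the other hand, iterating \eqref{eq_Vdyainv1} one has $\tilde V_{pqr}=\ang{\dya^n[pqr],\tilde V}$; the dyadic decomposition consists of $4^n$ simplices $\Delta_i$, each similar to $[pqr]$ with $\diam(\Delta_i)=\diam([pqr])/2^n$. Estimating each term uniformly,
\[
|\tilde V_{pqr}|\le 4^n\cdot \varepsilon'(\diam([pqr])/2^n)\cdot \frac{\diam([pqr])^2}{4^n}=\varepsilon'(\diam([pqr])/2^n)\cdot \diam([pqr])^2,
\]
with $\varepsilon'(h)\to 0$ as $h\to 0$; again, letting $n\to\infty$ forces $\tilde V_{pqr}=0$.

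There is no substantive obstacle here: the proof is a pure scaling/telescoping argument exploiting the fact that the little-$o$ bounds beat the combinatorial growth of the number of pieces in the dyadic refinements ($2^n$ for segments against decay $o(h)$, and $4^n$ for triangles against decay $o(h^2)$). The only mild subtlety is that the $o(\cdot)$ in \eqref{eq_estlimS1aa2} and \eqref{eq_estlimom1aa2} must be read as a pointwise hypothesis valid on all sufficiently small simplices, so that the modulus $\varepsilon$ (respectively $\varepsilon'$) depends only on the diameter and can be pulled out of the sum.
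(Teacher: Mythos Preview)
Your proposal is correct and follows essentially the same approach as the paper's proof: subtract two candidate couples, use the midpoint additivity \eqref{eq_Som2} to telescope $\tilde S$ along $\cut^n[pq]$ and kill it with the $o(\diam)$ bound, then use the dyadic invariance \eqref{eq_Vdyainv1} to write $\tilde V$ as a sum over $4^n$ small triangles and kill it with the $o(\diam^2)$ bound. Your closing remark about reading the $o(\cdot)$ as a uniform modulus depending only on the diameter is exactly how the paper uses these hypotheses as well.
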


\begin{proof}
Suppose that there are two couples
$(S_i,V_i)\in \germ^1(D)\times \germ^2(D)$, $i=1,2$ satisfying~\eqref{eq_estlimS1aa2},~\eqref{eq_estlimom1aa2},~\eqref{eq_Som2}
and~\eqref{eq_Vdyainv1}. Then for $S:=S_1-S_2$ and $V:=V_1-V_2$ we get
\begin{eqnarray}
\label{eq_estlimS1aa3}
|S_{pq}|\leq o(\diam([pq]) \quad\mbox{as $\diam([pq])\to 0$},\\
\label{eq_estlimom1aa3}
\begin{aligned}
|(V
-\delta S)_{pqr}| =
o\left(\diam([pqr])^2\right), \quad\mbox{as $\diam([pqr])\to 0$, and}
\end{aligned}\\
\label{eq_Som3}
(\delta S)_{prq}= 0. \quad\mbox{when $r=\frac{p+q}{2}$}.
\end{eqnarray}
For each $n\in \N$ dividing dyadically the line segment $[pq]$
by consecutive points
\[
r_j:= \left(1-  \frac j {2^n}\right) p +  \frac j {2^n}q , \quad j=0, \ldots, 2^n,
\]
we get
\begin{align*}
S_{pq}=\sum_{j=0}^{2^n} S_{r_jr_{j+1}}
\end{align*}
by~\eqref{eq_Som3}, and hence,
\[
|S_{pq}|\leq \sum_{j=0}^{2^n} |S_{r_jr_{j+1}}|\leq  2^n
o\left(\frac{|pq|}{2^{n}}\right) = |pq| o\left(1\right)
\]
as $n\to 0$,
by~\eqref{eq_estlimS1aa3}, and taking the limit in the above inequality as $n\to \infty$, we get $S_{pq}=0$.
Then~\eqref{eq_estlimom1aa3} is reduced to
\begin{equation}\label{eq_estlimom1aa4}
|V_{pqr}| =
o\left(\diam([pqr])^2\right)\quad\mbox{as $\diam([pqr])\to 0$}.
\end{equation}
Recalling that $\ang{(\dya)^n[pqr],V}=\ang{[pqr], V}$ for every $n\in \N$
(because both $V_1$ and $V_2$ are assumed to satisfy~\eqref{eq_Vdyainv1}), we get
using~\eqref{eq_estlimom1aa4} the estimate
\begin{align*}
|V_{pqr}| =
|\ang{\dya^n[pqr], V}| &= 2^{2n}
o\left(\frac{\diam([pqr])^2}{2^{2n}}\right) \\
& =\diam([pqr])^2 o(1) \to 0
\end{align*}
as $n\to \infty$. This implies $V=0$ concluding the proof.
\end{proof}

Consider now a sequence of continuous alternating germs $\{\omega_k\}\subset \germ^2(D)$ satisfying
\begin{eqnarray}
\label{eq_omkgam1}
|(\omega_k)_{pqr}|\leq C_1 \diam([pqr])^{\gamma_1}, 
\\
\label{eq_omkgam2}
|(\delta\omega_k)_{pqrs}|\leq C_2 \diam([pqr])^{\gamma_2}, 
\end{eqnarray}
with 
positive constants $\gamma_1>1$, $\gamma_2>2$,
$C_1$, $C_2$ independent on $[pqr]$, $[pqrs]$ and $k$.
\begin{equation}\label{def_omknSn}
 (\omega_k^n)_{pqr} := \ang{ \dya^n [pqr], \omega_k}, \quad
(S^n_k)_{pq} := \sum_{i=0}^{n-1} \ang{ \fill \cut^i [pq], \omega_k}.
\end{equation}
Lemma~\ref{lm_Vnlimcont1a} guarantees the existence for each $k\in \N$ of
continuous alternating germs
\begin{align*}
 (S_k)_{pq}& :=  \lim_{n \to \infty} (S_k^n)_{pq},\\
(V_k)_{pqr} & := \lim_{n \to \infty} (\omega_k^n)_{pqr} = \lim_{n \to \infty} ((\omega_k^n )_{pqr}- \delta (S_k^n)_{pqr}) + (\delta S^n_k)_{pqr}.
\end{align*}
Suppose further that $\omega_k\to \omega$ pointwise. Then clearly the latter
satisfy~\eqref{eq_omgam1} and~\eqref{eq_omgam1} and thus
Lemma~\ref{lm_Vnlimcont1a} provides the existence of continuous alternating germs
\begin{align*}
 S_{pq}& :=  \lim_{n \to \infty} S^n_{pq},\\
V_{pqr} & := \lim_{n \to \infty} \omega^n_{pqr} = \lim_{n \to \infty} (\omega^n _{pqr}- \delta S^n_{pqr}) + \delta S^n_{pqr},
\end{align*}
where $\omega^n$ and $S^n$ are defined by~\eqref{def_omnSn_abstr0}.
The following stability statement is valid.

\begin{lemma}\label{lm_Vnlimstab1}
Under the above conditions one has $S=\lim_k S_k$ and
$V=\lim_k V_k$
pointwise.
\end{lemma}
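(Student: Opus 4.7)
The plan is a standard three-epsilon argument: for fixed simplices $[pq]$ and $[pqr]$ and a given $\varepsilon > 0$, we choose $n$ large enough to make the $n$-th-level approximations close to their limits \emph{uniformly in $k$}, and then let $k \to \infty$ with $n$ fixed. The key point is that the constants $C$ appearing in the estimates~\eqref{eq_estlimS1aa} and~\eqref{eq_estlimom1ba} of Lemma~\ref{lm_Vnlimcont1a} depend only on $C_1$, $C_2$ and the exponents $\gamma_1, \gamma_2$, hence are the same for every $\omega_k$ (and for $\omega$, since $\omega$ also satisfies~\eqref{eq_omgam1} and~\eqref{eq_omgam2} with the same constants as passage to a pointwise limit preserves these bounds).

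First I would address the convergence $S_k \to S$. Applying~\eqref{eq_estlimS1aa} to each $\omega_k$ and to $\omega$, one has, for every $n, k \in \N$,
\[
|(S_k)_{pq} - (S_k^n)_{pq}| \leq C \diam([pq])^{\gamma_1} 2^{n(1 - \gamma_1)}, \qquad
|S_{pq} - S^n_{pq}| \leq C \diam([pq])^{\gamma_1} 2^{n(1 - \gamma_1)},
\]
with $C$ independent of $k$. Since $\gamma_1 > 1$, given $\varepsilon > 0$ we may fix $n$ so large that both right-hand sides are $< \varepsilon/3$. For this fixed $n$, the germ $S_k^n$ defined by~\eqref{def_omknSn} is a finite sum of the values of $\omega_k$ over explicitly determined simplices depending only on $[pq]$ and $n$; the pointwise convergence $\omega_k \to \omega$ therefore yields $(S_k^n)_{pq} \to S^n_{pq}$ as $k \to \infty$, so that for $k$ large enough $|(S_k^n)_{pq} - S^n_{pq}| < \varepsilon/3$. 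Combining the three estimates via the triangle inequality gives $|(S_k)_{pq} - S_{pq}| < \varepsilon$, concluding the pointwise convergence of $S_k$ to $S$.

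Next I would prove $V_k \to V$ by the same scheme, using~\eqref{eq_estlimom1ba} in place of~\eqref{eq_estlimS1aa}. Setting $\gamma := \gamma_1 \wedge \gamma_2$, one has uniformly in $k$
\[
|(V_k)_{pqr} - (\omega_k^n)_{pqr}| \leq C \diam([pqr])^{\gamma} 2^{n(1 - \gamma)}, \qquad
|V_{pqr} - \omega^n_{pqr}| \leq C \diam([pqr])^{\gamma} 2^{n(1 - \gamma)},
\]
and since $\gamma > 1$ the exponent $n(1-\gamma)$ is negative, so we may fix $n$ making both terms $< \varepsilon/3$. For this fixed $n$, $\omega_k^n$ is, by~\eqref{def_omknSn}, a finite sum of values of $\omega_k$ over a fixed dyadic subdivision of $[pqr]$, hence $(\omega_k^n)_{pqr} \to \omega^n_{pqr}$ as $k \to \infty$ by the pointwise convergence $\omega_k \to \omega$. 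A further three-epsilon triangle inequality yields $(V_k)_{pqr} \to V_{pqr}$.

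I do not expect any serious obstacle: the heavy lifting has already been done in Lemma~\ref{lm_Vnlimcont1a}, which provides the $k$-uniform rate of convergence of the discrete approximations; the only ingredient to check is that the bounding constants $C$ in~\eqref{eq_estlimS1aa} and~\eqref{eq_estlimom1ba} depend on $\omega_k$ only through $C_1$ and $C_2$ in~\eqref{eq_omkgam1}-\eqref{eq_omkgam2}, which is manifest from the proof of that lemma. All the rest is a routine $3\varepsilon$ interchange of limits.
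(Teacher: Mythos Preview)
Your proof is correct and, for the convergence $S_k\to S$, is essentially identical to the paper's: both use the three-epsilon splitting
\[
|(S_k)_{pq}-S_{pq}|\leq |(S_k)_{pq}-(S_k^n)_{pq}|+|(S_k^n)_{pq}-S^n_{pq}|+|S^n_{pq}-S_{pq}|,
\]
together with the $k$-uniform bound~\eqref{eq_estlimS1aa} and the fact that $S_k^n$ is a finite sum of values of $\omega_k$ on fixed simplices.

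For $V_k\to V$ your route is slightly more direct than the paper's. You use the estimate~\eqref{eq_estlimom1ba} (i.e.\ $|\omega^n-V|\leq C\,\diam^{\gamma_1\wedge\gamma_2}2^{n(1-\gamma_1\wedge\gamma_2)}$) and run the same three-epsilon argument with $\omega_k^n$ in the middle. The paper instead works with the refined estimate~\eqref{eq_estlimom1aa}, first proving $(V_k-\delta S_k)\to (V-\delta S)$ via
\[
(V_k-\delta S_k)-(V-\delta S)=-(\omega_k^n-V_k-\delta(S_k^n-S_k))+(\omega^n-V-\delta(S^n-S))-(\omega^n-\omega_k^n-\delta(S^n-S_k^n)),
\]
and only afterwards combining with $S_k\to S$ to conclude $V_k\to V$. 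Your shortcut is legitimate because $\gamma_1\wedge\gamma_2>1$ makes~\eqref{eq_estlimom1ba} already summable in $n$; the paper's detour through~\eqref{eq_estlimom1aa} is not needed for mere pointwise convergence (though it would give a sharper rate). Either way, the essential idea---uniform-in-$k$ constants in Lemma~\ref{lm_Vnlimcont1a} plus a fixed-$n$ finite sum---is the same.
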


\begin{proof}
We note first that
\begin{align*}
|(S_k^n)_{pq}- S^n_{pq}| & = \abs{\ang{\fill\cut^n[pq], \omega-\omega_k}}
\leq C_1 2^n \left(\frac{\diam ([pq])}{2^n}\right)^{\gamma_1}\to 0
\end{align*}
as $n\to \infty$ uniformly in $k$, which implies  $S=\lim_k S_k$ pointwise
via the standard estimate
\[
|(S_k)_{pq}- S_{pq}|\leq |(S_k)_{pq}- (S_k^n)_{pq}| + |(S_k^n)_{pq}- S^n_{pq}| +
 |S_{pq}- S^n_{pq}|.
\]

Writing
\begin{equation*}\label{eq_domn1nba0}
\begin{aligned}
(V_k-\delta S_k)- (V-\delta S) = -& \left(\omega_k^n-V_k -\delta(S_k^n-S_k)\right) +\\
& \left(\omega^n-V -\delta(S^n-S)\right) - \left(\omega^n-\omega_k^n
-\delta(S^n-S_k^n)\right),
\end{aligned}
\end{equation*}
and evaluating the latter relationship at $[pqr]$,  using
\begin{eqnarray*}
\begin{aligned}
|(\omega_k^n)_{pqr}-(V_k)_{pqr}
-\delta(S^n_k-S_k)_{pqr}| \leq
C 2^{n(2-\gamma_2)},\\
|\omega^n_{pqr}-V_{pqr}
-\delta(S^n-S)_{pqr}| \leq
C 2^{n(2-\gamma_2)}
\end{aligned}
\end{eqnarray*}
with $C>0$ independent of $n$ and $k$, we arrive at the estimate
\begin{equation}\label{eq_domn1nba1}
\begin{aligned}
|(V_k-\delta S_k)_{pqr}- (V-\delta S)_{pqr}| &\leq
2 C 2^{n(2-\gamma_2)} +
 \left|\omega^n_{pqr}-(\omega_k^n)_{pqr}
-\delta(S^n-S_k^n)_{pqr}\right|.
\end{aligned}
\end{equation}
Given an $\varepsilon>0$ we fix an $n=n(\varepsilon)\in \N$ such that
the first term on the right-hand side of~\eqref{eq_domn1nba1} does not exceed
$\varepsilon/2$, and since  $\lim_k S_k^n= S^n$ and
$\lim_k \omega_k^n= \omega^n$ pointwise, we get that also the second term on the does not exceed $\varepsilon/2$ for all sufficiently large $k$.
This means
\[
V-\delta S=\lim_k (V_k-\delta S_k)
\]
pointwise and therefore $V=\lim_k V_k$ pointwise since $\lim_k S_k=S$, concluding the proof.
\end{proof}

\section*{Acknowledgements} The authors are grateful to Elisabetta Chiodaroli and to the participants 
of the Workshop ``Rough calculus and weak geometric structures'' (Moscow, 2018) Valentino Magnani, Annalisa Massaccesi, Stefano Modena, Khadim War and Roger Z\"{u}st for stimulating discussions that largely influenced the research leading to this paper.

\bibliographystyle{plain}

\end{document}